\begin{document}

\newtheorem{theorem}{Theorem}[section]
\newtheorem{lemma}[theorem]{Lemma}
\newtheorem{corollary}[theorem]{Corollary}
\newtheorem{conjecture}[theorem]{Conjecture}
\newtheorem{question}[theorem]{Question}
\newtheorem{problem}[theorem]{Problem}
\newtheorem*{claim}{Claim}
\newtheorem*{criterion}{Criterion}
\newtheorem*{first_thm}{Theorem A}

\theoremstyle{definition}
\newtheorem{definition}[theorem]{Definition}
\newtheorem{construction}[theorem]{Construction}
\newtheorem{notation}[theorem]{Notation}
\newtheorem{convention}[theorem]{Convention}
\newtheorem*{warning}{Warning}

\theoremstyle{remark}
\newtheorem{remark}[theorem]{Remark}
\newtheorem{example}[theorem]{Example}

\def\homeo{\textnormal{Homeo}}
\def\diffeo{\textnormal{Diff}}
\def\bhomeo{\textnormal{BHomeo}}

\def\area{\text{area}}
\def\id{\text{id}}
\def\H{\mathbb H}
\def\Z{\mathbb Z}
\def\C{\mathbb C}
\def\CC{\mathcal C}
\def\R{\mathbb R}
\def\CP{\mathbb {CP}}
\def\RP{\mathbb {RP}}
\def\F{\mathcal F}
\def\G{\mathcal G}
\def\K{\mathcal K}
\def\wind{\textnormal{wind}}
\def\out{\textnormal{Out}}
\def\aut{\textnormal{Aut}}
\def\MCG{\textnormal{MCG}}
\def\fix{\textnormal{fix}}
\def\til{\tilde}
\def\length{\textnormal{length}}
\def\axis{\textnormal{axis}}

\title{The Euler class of planar groups}
\author{Danny Calegari}
\address{Department of Mathematics \\ Caltech \\
Pasadena CA, 91125}
\email{dannyc@its.caltech.edu}

\date{10/10/2008, Version 0.02}

\begin{abstract}
This is an exposition of the homological classification of actions of
surface groups on the plane, in every degree of smoothness.
\end{abstract}

\maketitle

\section{Introduction}

This note gives an exposition of Theorems C and D from the paper
\cite{Calegari}. In particular, it gives the homological classification of
actions of orientation-preserving closed orientable surface groups on the
plane, in every degree of smoothness. I would like to the thank the referee
for reading the paper carefully, and making useful corrections.

\section{Euler class}

\subsection{Homotopy type}

Let $\homeo_+(\R^2)$ denote the group of orientation preserving homeomorphisms
of the plane. This is a topological group with the compact-open topology.

The group $\homeo_+(\R^2)$ with the compact-open topology is homotopy equivalent to $S^1$.
Consequently there are homotopy equivalences of classifying spaces
$$\bhomeo_+(\R^2) \simeq \text{B}S^1 = \CP^\infty$$
and therefore
$$H^*(\bhomeo_+(\R^2);\Z) = H^*(\CP^\infty;\Z) = \Z[e]$$
where $e$ in dimension $2$ generates a free polynomial algebra. The element
$e$ is usually known as the {\em Euler class}.

\begin{remark}
One suggestive way to see this homotopy equivalence is to think of $\R^2$ as
$\CP^1 - \infty$. Given an element $f$ of $\homeo_+(\R^2)$ there is
a unique $\sigma_f \in \C \rtimes \C^*$, the affine group of $\C$, such that
$f_\sigma:=\sigma_f \circ f$ fixes $0,1,\infty$ and therefore induces a map from the
thrice-punctured sphere to itself. If $f$ is sufficiently regular, for instance,
if it is quasiconformal, then there is a Beltrami differential $\mu$ such that
$$\frac {\partial f_\sigma} {\partial \overline{z}} = \mu$$
If $f_\sigma^t$ with $0 \le t \le 1$
denotes the unique quasiconformal self-homeomorphism of $\CP^1$ fixing $0,1,\infty$
with Beltrami differential $(1-t)\mu$, then $f_\sigma^t$ defines a homotopy
in $\homeo_+(\R^2)$ from $f_\sigma$ to $\id$. This demonstrates that
the coset space of $\C \rtimes \C^*$ in the subgroup of quasiconformal
transformations in $\homeo_+(\R^2)$ is contractible, and 
therefore that this subgroup at least has the homotopy type of $\C \rtimes \C^*$, which is to say,
of a circle. Classical arguments can be adapted to show that
the inclusion of the group of quasiconformal homeomorphisms into the group of all
homeomorphisms is a homotopy equivalence (compare with Kneser \cite{Kneser}, \S~2.2 and \S~2.3).
\end{remark}

Given a group $G$, an action of $G$ on $\R^2$ by orientation preserving
homeomorphisms is the same thing as a homomorphism
$$\rho:G \to \homeo_+(\R^2)$$
We can pull back $e$ by $\rho^*$ to an element in group cohomology
$\rho^*e \in H^2(G;\Z)$.

Let $S_g$ denote the closed, orientable surface of genus $g\ge 1$. In the case
$G = \pi_1(S_g)$ the class $\rho^*e$ can be evaluated on the fundamental class
$[S_g]$ which generates $H_2(G;\Z) = \Z$ (implicitly we need to choose an 
orientation on $S_g$ to define $[S_g]$). We call this value the {\em Euler number}
of the action.

The basic question arises as to what Euler numbers can arise for
different $G$, and for different constraints on the analytic quality of
$\rho(G)$.

\begin{remark}\label{circle_example}
The group $\homeo_+(S^1)$ also has the homotopy type of $S^1$, and therefore
we may ask an analogous question for actions of surface groups on circles. In
this case, the well-known {\em Milnor-Wood inequality} is the following inequality.
\begin{theorem}[Milnor-Wood inequality \cite{Milnor,Wood}]\label{Milnor_Wood}
Let $\rho:\pi_1(S_g) \to \homeo_+(S^1)$ be any action, with $g \ge 1$. 
Then there is an inequality
$$|\rho^*e([S_g])| \le -\chi(S_g)$$
\end{theorem}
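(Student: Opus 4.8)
The plan is to realize $\rho^*e$ as the obstruction to lifting $\rho$ across the central extension
$$0 \to \Z \to \widetilde{\homeo}_+(S^1) \xrightarrow{\ \pi\ } \homeo_+(S^1) \to 1,$$
where $\widetilde{\homeo}_+(S^1)$ is the group of homeomorphisms of $\R$ commuting with $T\colon x\mapsto x+1$, and then to estimate that obstruction by exploiting that $e$ has a \emph{bounded} cocycle representative. Concretely, for $f\in\homeo_+(S^1)$ let $\til s(f)$ be the unique lift with $\til s(f)(0)\in[0,1)$; then $c(f,h):=\til s(f)\,\til s(h)\,\til s(fh)^{-1}$ is always translation by an integer lying in $\{0,1\}$, since $\til s(f)\til s(h)(0)$ and $\til s(fh)(0)$ lie in $[0,2)$ and $[0,1)$ respectively, and $c$ is a $2$-cocycle on $\homeo_+(S^1)$ representing $e$. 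Passing to $\R$ coefficients and subtracting the coboundary of the constant $1$-cochain $\tfrac12$ produces a cohomologous cocycle with values in $\{-\tfrac12,\tfrac12\}$, so $e$ lies in the image of $H^2_b$ with $\ell^\infty$-norm at most $\tfrac12$; pulling back, $\|\rho^*e\|_\infty\le\tfrac12$ in $H^2_b(\pi_1(S_g);\R)$.

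The second ingredient is the simplicial (Gromov) $\ell^1$-norm of the fundamental class. For $g\ge2$ one has $\|[S_g]\|_1 \le -2\chi(S_g) = 4g-4$, approached by efficient fundamental cycles coming from the hyperbolic structure (each ideal triangle carries area $\pi$, and the total area is $2\pi(2g-2)$); only this upper bound is needed here. For $g=1$ the group $\Z^2$ is amenable, or more simply $T^2$ admits self-maps of every degree $d$, so $\|[T^2]\|_1 = 0$.

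Combining the two via the duality estimate $|\langle\alpha,z\rangle|\le\|\alpha\|_\infty\,\|z\|_1$ for $\alpha\in H^2_b$ and $z\in H_2$, applied to $\alpha=\rho^*e$ and $z=[S_g]$, gives
$$|\rho^*e([S_g])|\ \le\ \|\rho^*e\|_\infty\cdot\|[S_g]\|_1\ \le\ \tfrac12(4g-4)\ =\ 2g-2\ =\ -\chi(S_g),$$
the right-hand side being $0$ when $g=1$. Equivalently, in the hands-on formulation one fixes lifts $\til a_i,\til b_i\in\widetilde{\homeo}_+(S^1)$ of the images of a standard generating set, observes that $\prod_{i=1}^g[\til a_i,\til b_i]$ lifts $\rho(\prod_i[a_i,b_i])=\id$ and hence equals $T^n$ for a unique $n\in\Z$ with $n=\pm\rho^*e([S_g])$, and the whole content is the inequality $|n|\le 2g-2$.

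The main obstacle is precisely the sharp constant. The crude inputs — that $c$ takes values in $\{0,1\}$ and that the $4g$-gon presenting $\pi_1(S_g)$ triangulates into $4g-2$ triangles, or equivalently that in $\widetilde{\homeo}_+(S^1)$ a single commutator has maximal displacement $M([\til f,\til g])<2$ — only yield $|n|\le 2g-1$, which is off by one; and no bound on a \emph{single} commutator can do better, since Fuchsian representations realize $|e|=2g-2>g-1$ for $g\ge2$. Obtaining the sharp value forces one to use cancellation across the entire relator: equivalently, the true value $4g-4<4g-2$ of the Gromov norm, or the symmetrized norm-$\tfrac12$ representative of $e$ evaluated on an \emph{efficient} real fundamental cycle rather than on the polygon. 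This refinement, due in the $C^0$ setting to Wood, is where the real work lies; the rest is formal.
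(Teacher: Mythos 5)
Your argument is correct in outline, but note that the paper itself does not prove Theorem~\ref{Milnor_Wood} at all: it is quoted with citations to Milnor and Wood, whose original proofs are elementary estimates on lifts --- fix lifts $\til a_i,\til b_i\in\widetilde{\homeo}_+(S^1)$, observe $\prod_i[\til a_i,\til b_i]=T^n$ with $n=\rho^*e([S_g])$, and bound $|n|$ by a careful (relator-level, not commutator-by-commutator) displacement estimate, which is exactly the sharpening you flag in your last paragraph. Your route is the standard bounded-cohomology alternative (Gromov, Ghys): the verification that $c(f,h)\in\{0,1\}$ for the section $\til s$, hence that $e$ has a real representative of sup norm $\le\tfrac12$ after subtracting the coboundary of the constant $\tfrac12$, is correct, as are the duality step $|\langle\rho^*e,[S_g]\rangle|\le\|\rho^*e\|_\infty\,\|[S_g]\|_1$ (using that $S_g$ is aspherical, so group and space pairings agree) and the $g=1$ case via $\|[T^2]\|_1=0$ or amenability of $\Z^2$. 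The one soft spot is the input $\|[S_g]\|_1\le 4g-4$: your parenthetical justification (straightened simplices have area at most $\pi$, total area $2\pi(2g-2)$) is in fact the proof of the \emph{lower} bound $\|[S_g]\|_1\ge 4g-4$, whereas the upper bound you need requires a construction --- either Gromov's smearing of ideal triangles, or, more elementarily, the transfer trick: the one-vertex triangulation bound $\|[S_{g'}]\|_1\le 4g'-2$ applied to the degree-$n$ cover of genus $g'=n(g-1)+1$, pushed forward by the covering (which does not increase $\ell^1$-norm and multiplies the fundamental class by $n$), gives $\|[S_g]\|_1\le 4(g-1)+2/n$ for every $n$. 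With that reference or argument supplied, your proof is complete and sharp; compared with the cited classical arguments it is less self-contained (the sharp constant is outsourced to the simplicial-volume bound) but more conceptual, and it explains for free phenomena the elementary proof does not, such as the vanishing of the Euler class for actions of amenable groups.
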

Moreover it is known that every value which satisfies this inequality can be
realized by an action with any degree of smoothness (in fact, even for analytic
actions).
\end{remark}

\subsection{Calculating $e$}

In this section we give several methods for computing $\rho^*e$. 

\subsubsection{Central extension}
For a group $G$, elements of $H^2(G;\Z)$ classify central extensions of $G$, i.e.
short exact sequences $\Z \to H \to G$ where $\Z$ is central, up to isomorphism.
Let $\G$ denote the group of germs of elements of $\homeo_+(\R^2)$ at infinity.
Every neighborhood of infinity may be restricted further to some annulus neighborhood
$A$. If $\til{A}$ denotes the universal cover of $A$, then elements of
$\G$ can be lifted to germs of homeomorphisms of $\til{A}$. Let $\til{\G}$ denote
the group of lifts of this form. Then there is a central extension
$$\Z \to \til{\G} \to \G$$
where $\Z$ acts on $\til{A}$ as the deck group. The class of this extension
is $e$.

\begin{remark}\label{germ_remark}
Note from this construction that the Euler class depends only on the germ
of a group action at infinity. Another way to see this fact is as follows.
Let $\K$ denote the subgroup of $\homeo_+(\R^2)$ consisting of homeomorphisms
of compact support. Then $\K$ is obviously normal, and there is an
exact sequence
$$\K \to \homeo_+(\R^2) \to \G$$

Moreover, $\K$ is contractible, as can be seen
by the Alexander trick: let $\zeta_t$ be the dilation $\zeta_t:z \to tz$. Then
for any $h \in \K$ the family $h_t: = \zeta_t h \zeta_t^{-1}$ as $t$ goes from
$1$ to $0$ is a path in $\K$ from $h$ to $\id$. This construction shows that
$\K$ is contractible, and therefore that $e$ comes from a class in the
cohomology of $\G$.
\end{remark}

\subsubsection{Bundle}
Given $\rho:\pi_1(S) \to \homeo_+(\R^2)$ one can form a bundle
$$E_\rho = \til{S} \times \R^2/ (s,t) \sim (\alpha(s),\rho(\alpha)(t))$$
over $S = \til{S}/s \sim \alpha(s)$ with fiber $\R^2$. The total space of
$E_\rho$ is a (noncompact) oriented $4$-manifold. Since the fiber $\R^2$ is
contractible, there is a section $\sigma:S \to E_\rho$. The Euler class
of the bundle is the self-intersection number of this section
$$[\sigma(S)] \cap [\sigma(S)] = \rho^*e([S])$$

If the action of $\rho(\pi_1(S))$ is differentiable, the following related
construction makes sense. Let
$D_\rho:\til{S} \to \R^2$ be an equivariant developing map. Since $\R^2$ is
contractible, such a developing map can be constructed skeleton by skeleton
over a fundamental domain for $S$. The vector bundle $T\R^2$ pulls back
to an $\R^2$ (vector space) bundle $\til{E}'_\rho$ over $\til{S}$. Since by
hypothesis the action of $\pi_1(S)$ is differentiable, it acts on $T\R^2$ by
bundle maps, and therefore also on $\til{E}'_\rho$. The quotient $E'_\rho$ is
an $\R^2$ (vector space) bundle over $S$, and $\rho^*e$ is the obstruction to
finding a non-zero section of this bundle.

These two bundles are closely related. The developing map
$D_\rho$ determines and is determined by the section $\sigma$. If the action
is smooth, $\sigma$ can be chosen to be smooth, and $E'_\rho$ is the normal
bundle in $E_\rho$ of $\sigma(S)$.

\subsubsection{Graphical formula}\label{graphical_formula_subsubsection}
Suppose the action of $\pi_1(S_g)$ is at least $C^1$. Let $P$ be a fundamental
polygon for $S$ and $D_\rho\partial P$ the image of the boundary under a $C^1$
developing map which is an immersion on each edge of $\partial P$. 
Let $\delta$ be obtained from $D_\rho\partial P$ by ``smoothing''
the image at the corners; i.e. $\delta$ is the image of the boundary of a regular
neighborhood of $\partial P$ in $P$. Then there is a formula
$$\rho^*e([S_g]) = \wind(\delta) + 1 - 2g$$
where $\wind(\delta)$ denotes the winding number of $\delta$.

To relate this to the previous definition, observe that the immersion defines
a trivialization of $E'_\rho$ over the $1$-skeleton, with a multi-saddle
singularity at the vertex. The winding number of $\delta$ represents the obstruction
to extending this ``trivialization'' over the $2$-skeleton.

\subsection{$C^0$ case}

The following example seems to have been first observed by Bestvina.

\begin{example}[Bestvina]\label{Bestvina_Example}
Let $\tau$ be a Dehn twist in a thin annulus $A$ centered at the origin with radius approximately
$1$. Let $\beta$ be the dilation $z \to 2z$, and suppose that the closure of $A$ is
disjoint from its translates under powers of $\beta$. Define
$$\alpha = \prod_{n \in \Z} \beta^n \tau \beta^{-n}$$
In other words, $\alpha$ is the product of Dehn twists in a family of concentric annuli
all nested about the origin. By construction $[\alpha,\beta] = \id$, so there is
a representation $\rho:\pi_1(S_1) \to \homeo_+(\R^2)$ taking one generator to
$\alpha$ and the other to $\beta$. Observe that $\alpha$ is only $C^0$ at the origin,
but that $\alpha$ and $\beta$ are $C^\infty$ away from the origin.

Notice that $\alpha$ and $\beta$ both fix $0$ so the Euler class represents
the obstruction to lifting $\rho$ to an action on the universal cover of $\R^2 - 0$.
Let $A = \R^2 - 0$ and let $\til{A}$ denote this cover. If we think of $\R^2$ as
$\C$, then we can take $\til{A} = \C$ and the covering map $\til{A} \to A$ can
be taken to be the exponential map. A natural lift of $\beta$ is 
$\til{\beta}: z \to z + \log 2$. However, if
$\til{\alpha}$ is a lift of $\alpha$ which fixes some $z$ for which $e^z$ is not in
the support of $\alpha$, then $\alpha$ acts on the line $z+n\log 2 + it$ (for fixed integer
$n$ and for $t \in \R$) as $z \to z+n2\pi i$. It follows that $[\til{\alpha},\til{\beta}]$
is the translation $z \to z+ 2\pi i$, which is the generator of the deck group of
the covering $\C \to \C^*$. In other words, the value of the Euler class on $[S_1]$ is
$1$.

Replacing $\alpha$ by $\alpha^n$ produces
an action of $\Z \oplus \Z$ with Euler number $n$. Factoring this action with a
degree one map $\pi_1(S_g) \to \pi_1(S_1)$ gives an action of $\pi_1(S_g)$ on $\R^2$
with any Euler number.
\end{example}

In other words:

\begin{theorem}\label{continuous_any}
For any $g \ge 1$ and any $n \in \Z$ there is a representation
$\rho:\pi_1(S_g) \to \homeo_+(\R^2)$ with $\rho^*e([S_g]) = n$.
\end{theorem}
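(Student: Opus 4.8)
The plan is to reduce at once to the case $g=1$ and then run the construction of Example~\ref{Bestvina_Example}. Suppose we have produced, for each $n\in\Z$, a representation $\rho_0\colon\pi_1(S_1)\to\homeo_+(\R^2)$ with $\rho_0^*e([S_1])=n$. For $g\geq 1$ choose a degree one map $f\colon S_g\to S_1$ (for $g=1$ the identity; for $g\geq 2$ collapse all but one of the handles), so that on fundamental groups $f_*\colon\pi_1(S_g)\to\pi_1(S_1)$ kills all but one standard pair of generators and hence respects the surface relation, while on second homology $f_*[S_g]=[S_1]$. Set $\rho:=\rho_0\circ f_*$. By naturality of the pullback and of the cap product,
$$\rho^*e([S_g])=\langle f^*\rho_0^*e,[S_g]\rangle=\langle\rho_0^*e,f_*[S_g]\rangle=\langle\rho_0^*e,[S_1]\rangle=n,$$
so it suffices to realize every integer as the Euler number of an action of $\Z\oplus\Z$ on $\R^2$.

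For $g=1$ I will run the Bestvina construction with an $n$-fold twist. Let $\beta\colon z\mapsto 2z$, let $\tau$ be the $n$-th power of a Dehn twist supported in a thin round annulus $A$ about the origin of radius approximately $1$ whose closure is disjoint from all its translates $\beta^k(A)$, and put $\alpha:=\prod_{k\in\Z}\beta^k\tau\beta^{-k}$. First I must check this infinite product really is an element of $\homeo_+(\R^2)$: the supports $\beta^k(A)$ are pairwise disjoint round annuli centered at radii $2^k$, so only finitely many of them meet any compact subset of $\R^2\setminus 0$ and $\alpha$ restricts to a homeomorphism there; moreover each factor preserves the modulus $|z|$, hence so does $\alpha$, so $\alpha$ extends continuously over the origin by $\alpha(0)=0$ and is a homeomorphism of $\R^2$. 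Because conjugation by $\beta$ merely reindexes the product, $\beta\alpha\beta^{-1}=\alpha$; thus $[\alpha,\beta]=\id$ and there is a well-defined $\rho\colon\pi_1(S_1)=\langle a,b\mid[a,b]\rangle\to\homeo_+(\R^2)$ with $\rho(a)=\alpha$ and $\rho(b)=\beta$.

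To compute $\rho^*e([S_1])$ I will use the central extension description of $e$, following Example~\ref{Bestvina_Example}: since $\alpha$ and $\beta$ fix $0$ the action preserves $\R^2\setminus 0$, and $\rho^*e([S_1])$ is the obstruction to lifting $\rho$ to the universal cover of $\R^2\setminus 0$. Identify $\R^2$ with $\C$, take that cover to be $\C$ with covering map $\exp$ and deck group generated by $w\mapsto w+2\pi i$, and lift $\beta$ to $\til\beta\colon w\mapsto w+\log 2$. The factor $\beta^k\tau\beta^{-k}$ is an $n$-fold twist in the vertical strip lying over $\beta^k(A)$, around real part $k\log 2$; lifting $\alpha$ to the $\til\alpha$ that is the identity on one chosen gap strip between two consecutive annuli, and passing from one strip to the next, one sees that $\til\alpha$ translates the gap over real part near $k\log 2$ by $2\pi i\,n k$. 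A short computation of $\til\alpha\til\beta\til\alpha^{-1}\til\beta^{-1}$ on such a gap then returns $w\mapsto w+2\pi i\,n$; since this is a lift of $[\alpha,\beta]=\id$ it is a single deck transformation, namely $n$ times the generator, which is exactly the assertion $\rho^*e([S_1])=n$.

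The only real work is bookkeeping: verifying that $\alpha$ is a genuine homeomorphism (done above via modulus-preservation and local finiteness of the supports) and tracking signs and the choice of normalizing gap in the commutator computation. That commutator computation is the one place the Euler number actually gets produced, so it is the step I expect to demand the most care; everything else is formal.
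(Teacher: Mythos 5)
Your proposal is correct and follows essentially the same route as the paper: it is Bestvina's construction (an infinite product of twists in disjoint annuli conjugated by the dilation $z\mapsto 2z$), with the Euler number read off as the deck translation of the lifted commutator on the cover $\C\to\C^*$, and the general genus case obtained by pulling back through a degree one map $S_g\to S_1$. Using $\tau^n$ in the product rather than replacing $\alpha$ by $\alpha^n$ gives the same homeomorphism since the supports are disjoint, and your extra checks (local finiteness of supports, continuity at the origin, naturality under the degree one map) are just elaborations of steps the paper leaves implicit.
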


\subsection{$C^\infty$ case}

Consider the following construction.

\begin{example}
Let $\beta$ and $\tau$ be as in Example~\ref{Bestvina_Example}. Define
$$\alpha = \prod_{n \ge 0}\beta^n \tau \beta^{-n}$$
It follows that $[\alpha,\beta] = \tau$. Let $\gamma$ be a homeomorphism
with $\gamma:z \to z+3$ for $\text{real}(z)>-2$ and $\gamma:z \to z$ for
$\text{real}(z) < -3$, and define
$$\delta = \prod_{n \ge 0} \gamma^n \tau \gamma^{-n}$$
Then $[\delta,\gamma]=\tau=[\alpha,\beta]$ so the four elements
$\alpha,\beta,\gamma,\delta$ together define a representation
$\rho:\pi_1(S_2) \to \diffeo_+(\R^2)$.

The image of $[\delta,\gamma] = [\alpha,\beta]$ in $\G$ is trivial, so
the Euler class breaks up into a contribution from two copies of $\Z \oplus \Z$
in $\G$. The first $\Z \oplus \Z$ action has the same image in $\G$ as
the action described in Example~\ref{Bestvina_Example}. The group
generated by $\delta$ and $\gamma$ fixes all points $z$ with
$\text{real}(z)$ sufficiently negative, and such fixed points can be lifted to
give a section of the image in $\G$ to $\til{\G}$, so the Euler class vanishes
on this second copy of $\Z \oplus \Z$. Hence $\rho^*(e)([S_2])=1$ in this example. If
$\tau$ is replaced by $\tau^n$ then $\rho^*(e)([S_2])=n$.
\end{example}

Summarizing:

\begin{theorem}\label{smooth_2_and_more}
For any $g \ge 2$ and any $n \in \Z$ there is a representation
$\rho:\pi_1(S_g) \to \diffeo_+(\R^2)$ with $\rho^*e([S_g]) = n$.
\end{theorem}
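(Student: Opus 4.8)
The statement is essentially contained in the Example that precedes it, so the plan is to organize that construction into a proof and supply the two points that need justification: that $\alpha$ and $\delta$ are genuinely diffeomorphisms, and the computation of the Euler number. First I would fix a presentation $\pi_1(S_2) = \langle a_1,b_1,a_2,b_2 \mid [a_1,b_1]=[a_2,b_2]\rangle$ and define $\rho$ by $\rho(a_1)=\alpha$, $\rho(b_1)=\beta$, $\rho(a_2)=\delta$, $\rho(b_2)=\gamma$, with $\beta$, $\gamma$, $\tau$ as in the Example (taking $\gamma$ smooth), $\alpha=\prod_{n\ge 0}\beta^n\tau\beta^{-n}$ and $\delta=\prod_{n\ge 0}\gamma^n\tau\gamma^{-n}$. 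Since the annuli $\beta^n A$ are pairwise disjoint the factors of $\alpha$ pairwise commute, and a telescoping identity gives $\alpha(\beta\alpha\beta^{-1})^{-1}=\tau$, i.e. $[\alpha,\beta]=\tau$; likewise $[\delta,\gamma]=\tau$, so $\rho$ respects the relation and defines a homomorphism into $\diffeo_+(\R^2)$ once $\alpha,\delta\in\diffeo_+(\R^2)$ is established.

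For smoothness, note that $\beta\colon z\mapsto 2z$ is expanding, so the supports $\beta^n A$, $n\ge 0$, escape every compact set; hence on any compact subset of $\R^2$ all but finitely many factors of $\alpha$ are trivial, so $\alpha$ coincides there with a finite product of diffeomorphisms and is smooth. The same argument applies to $\delta$: $\gamma$ is translation by $3$ on the half-plane $\mathrm{real}(z)>-2$ and $A$ lies in that half-plane, so the supports $\gamma^n A$, $n\ge 0$, march off to $+\infty$ and again accumulate only at infinity. This is precisely the feature distinguishing the construction from Example~\ref{Bestvina_Example}, where one also uses the negative powers $\beta^{-n}A$, which accumulate at the origin and force $\alpha$ to be merely $C^0$ there.

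To compute $\rho^*e([S_2])$ I would pass to the germ group $\G$ of Remark~\ref{germ_remark}, using that $e$ is pulled back from the central extension $\Z\to\til{\G}\to\G$. Since $\tau$ has compact support, $[\alpha,\beta]=\tau=[\delta,\gamma]$ maps to the identity in $\G$, so $\bar\rho\colon\pi_1(S_2)\to\G$ carries each of $\langle a_1,b_1\rangle$, $\langle a_2,b_2\rangle$ to a commuting pair and therefore factors through the quotient $\pi_1(S_2)\to\pi_1(S_1)*\pi_1(S_1)=\Z^2*\Z^2$ induced by the degree-one pinch $f\colon S_2\to S_1\vee S_1$ collapsing a separating curve. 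As $f$ sends $[S_2]$ to the sum of the two torus classes under $H_2(S_1\vee S_1)=H_2(S_1)\oplus H_2(S_1)$, naturality of $e$ gives $\rho^*e([S_2])=\bar\rho_1^*e([S_1])+\bar\rho_2^*e([S_1])$, where $\bar\rho_1$ is the germ of the $\Z^2$-action generated by $\alpha,\beta$ and $\bar\rho_2$ that generated by $\gamma,\delta$. Now $\langle\gamma,\delta\rangle$ fixes every $z$ with $\mathrm{real}(z)$ sufficiently negative, hence fixes an unbounded sector near infinity; choosing, for each element, the lift to $\til{A}$ fixing one component of the preimage of that sector defines a homomorphic section of $\Z\to\til{\G}\to\G$ over $\bar\rho_2(\Z^2)$, so $\bar\rho_2^*e([S_1])=0$. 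On the other hand $\alpha$ differs from the map $\alpha$ of Example~\ref{Bestvina_Example} by $\prod_{n\ge 1}\beta^{-n}\tau\beta^{n}$, which has compact support; hence $\bar\rho_1$ has the same germ at infinity as the Bestvina $\Z^2$-action and thus the same Euler class, which by the computation there is $\pm 1$ — the sign is immaterial, being absorbed by an orientation choice or by replacing $\tau$ with $\tau^{-1}$. Therefore $\rho^*e([S_2])=\pm 1$; replacing $\tau$ by $\tau^n$ scales this to $n$, and precomposing with a degree-one map $\pi_1(S_g)\to\pi_1(S_2)$ (which exists for every $g\ge 2$ and carries $[S_g]$ to $[S_2]$) yields, for each $g\ge 2$ and each $n\in\Z$, a representation of $\pi_1(S_g)$ into $\diffeo_+(\R^2)$ with Euler number $n$.

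I expect the Euler-number step to be the main obstacle. One must verify carefully that $\bar\rho$ really factors through $\Z^2*\Z^2$, that $e$ is additive under the resulting splitting of $H_2(S_1\vee S_1)$, that the fixed sector genuinely produces a group-theoretic section over $\bar\rho_2(\Z^2)$, and that the appeal to Example~\ref{Bestvina_Example} is legitimate — i.e. that that computation depends only on the germ of the action at a fixed point and is unaffected by the compactly supported discrepancy between the two maps called $\alpha$. By comparison, the smoothness check and the passage from $S_2$ to $S_g$ are routine.
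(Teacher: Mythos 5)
Your proposal is correct and follows essentially the same route as the paper: the same construction with $\alpha,\beta,\gamma,\delta$, the same passage to the germ group $\G$ to split the Euler class into two $\Z\oplus\Z$ contributions, the vanishing of the second via a section coming from the pointwise-fixed region, the identification of the first with the Bestvina example (you justify the ``same image in $\G$'' step via the compactly supported discrepancy, which is exactly the content of Remark~\ref{germ_remark}), and then $\tau\mapsto\tau^n$ plus degree-one maps $\pi_1(S_g)\to\pi_1(S_2)$. The points you flag as needing care are handled correctly in your own sketch, so no gap remains.
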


It remains to understand what Euler numbers may be realized by
actions of $\pi_1(S_1) = \Z \oplus \Z$ which are $C^1$ or smoother.
We address this in the next section.

\section{$C^1$ actions of $\Z \oplus \Z$}

The main purpose of this section is to prove:

\begin{theorem}\label{torus_smooth_vanishes}
Let $\rho:\pi_1(S_1) \to \diffeo_+^1(\R^2)$ be a $C^1$ action of $\Z \oplus \Z$
on the plane. Then $\rho^*e([S_1]) = 0$.
\end{theorem}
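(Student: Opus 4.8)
The plan is to analyze the action near its fixed point structure and use the $C^1$ hypothesis to rule out the nonvanishing behavior seen in Bestvina's $C^0$ example. Write $\rho(a) = f$ and $\rho(b) = g$ for commuting generators $f,g \in \diffeo^1_+(\R^2)$. Since the Euler class depends only on the germ at infinity (Remark~\ref{germ_remark}), the first step is to reduce to understanding the dynamics of $f$ and $g$ near infinity, or equivalently — after compactifying — near a common fixed point. The key tool should be a fixed-point theorem: a result in the spirit of the Brouwer plane translation theorem together with commutativity (work of Bonatti, Franks, Handel, Calegari, and others) guarantees that two commuting orientation-preserving surface-of-the-plane diffeomorphisms, when the Euler class is potentially nonzero, have a common fixed point $p$. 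Then $\rho$ lifts to an action on the universal cover $\widetilde{A}$ of a punctured neighborhood $A = U \setminus \{p\}$, and $\rho^*e([S_1])$ measures the failure of the lifts $\tilde f, \tilde g$ to commute: $[\tilde f, \tilde g]$ is translation by $\rho^*e([S_1])$ times the deck generator.

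The heart of the argument is then to show that the linearized action forces this commutator to be trivial. Because $f$ and $g$ are $C^1$ and fix $p$, we may pass to the derivatives $Df_p, Dg_p \in GL_2^+(\R)$; these commute, so they lie in a common one-parameter subgroup or at least a common maximal abelian subgroup of $GL_2^+(\R)$. The next step is to compare the rotation number of the lifted action on $\widetilde{A}$ — which is what the Euler number records — with the winding data of the derivative cocycle. Concretely, blowing up the fixed point $p$ replaces the punctured disk by a closed annulus with the projectivized derivative action on the boundary circle; the Euler number is computed (via the graphical/winding formula of \S\ref{graphical_formula_subsubsection}, or directly) from the degree of the circle maps induced by $Df_p$ and $Dg_p$. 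Since these derivatives commute and lie in $GL_2^+(\R)$, the induced pair of circle homeomorphisms commute and, crucially, at least one of them has a fixed point on the circle (an eigendirection, or the whole $\RP^1$ is fixed if the matrix is a positive scalar or the common subgroup is parabolic/hyperbolic type), which forces the relevant rotation numbers to be integers compatible with a common lift, making $[\tilde f, \tilde g]$ the identity.

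More carefully, one has to handle the case where $Df_p$ and $Dg_p$ generate a subgroup conjugate into $SO(2)$, i.e., both are elliptic. Here the boundary circle maps are genuine rotations with no fixed points, so the previous paragraph's shortcut fails, and this is where I expect the main obstacle to lie. The resolution should be that an elliptic linear part that is nontrivial has rotation number an irrational (or at least a rational with controlled denominator) multiple of $2\pi$, and then commutativity of $f$ and $g$ — not just of their derivatives — together with the $C^1$ estimate near $p$ pins down the lift of the commutator: the discrepancy between $\widetilde{fg}$ and $\widetilde{gf}$ near the blown-up boundary is controlled by the rotation numbers of $Df_p$ and $Dg_p$, which, being derivatives of commuting maps, must satisfy a relation that kills the integer Euler contribution. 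One needs to be careful that the ``winding number at infinity'' picked up by $\delta$ in the graphical formula exactly cancels the $1 - 2g = 1 - 2 = -1$ correction (here $g = 1$), so that $\rho^*e([S_1])$ really does come out to $0$; this bookkeeping, plus verifying the fixed-point input applies in the degenerate linear cases, is the technical crux. The final step is simply to assemble these pieces: common fixed point, reduce to linearized boundary action, check in each Jordan-type case that the lifted commutator is trivial, and conclude $\rho^*e([S_1]) = 0$.
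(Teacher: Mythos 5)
Your proposal hinges on the claim that, whenever the Euler class could be nonzero, the commuting generators $f$ and $g$ have a common fixed point, after which you linearize at that point. That first step is a genuine gap: there is no theorem you can cite that produces a common fixed point here, and the statement ``nonzero Euler class forces a common fixed point'' is essentially the content of the theorem itself, not an available input. Commuting $C^1$ diffeomorphisms of the plane need not have any fixed points at all (e.g.\ actions by translations), and more generally one generator may have fixed points while no point is fixed by the whole group; the known commuting-fixed-point results (Bonatti's for diffeomorphisms near the identity, Franks--Handel type results in other settings) do not apply in this generality. The paper's proof is organized precisely around this difficulty: it splits into three cases --- (1) some generator $\alpha$ fixes a point $p$ whose $\beta$-orbit is \emph{not} proper, (2) $\alpha$ fixes $p$ and the $\beta$-orbit of $p$ is proper, (3) the action is free --- and only in a sub-case of (1) does a common fixed point appear. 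Case (2) is handled by an intersection-number formula for the Euler class (Lemma~\ref{signed_sum_is_euler}) combined with a covering/averaging trick over the index-$(2n+1)$ subgroups, and case (3) uses Brouwer's plane translation theorem and Brown's criterion to show that free arcs have a well-defined writhe, giving a nonvanishing section of $E'_\rho$. Your proposal addresses none of this, so the bulk of the theorem is missing.

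A secondary point: even in the common-fixed-point case your argument is more complicated than necessary and partly off-target. Once $f$ and $g$ fix $p$ and are $C^1$ there, projectivizing the derivatives gives a $\Z\oplus\Z$ action on $\RP^1\cong S^1$, and the Milnor--Wood inequality (Theorem~\ref{Milnor_Wood}) with $\chi(S_1)=0$ kills the Euler class outright; no case analysis on elliptic versus hyperbolic versus parabolic linear parts, and no rotation-number bookkeeping, is needed. (Bestvina's $C^0$ example shows the differentiability at the fixed point is exactly what this step uses, so your instinct about where $C^1$ enters is right, but only for this one case.) The harder sub-case of (1), where $\alpha$ fixes $p$ and the $\beta$-orbit of $p$ merely accumulates, is handled in the paper by comparing writhes of an arc $\tau$ and $\alpha(\tau)$ near the accumulation point, using that a $C^1$ map is $C^1$-close to linear near a fixed point --- again an argument your linearization-at-a-single-point framework does not cover.
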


Together with Theorem~\ref{continuous_any} and
Theorem~\ref{smooth_2_and_more} this gives the full homological
classification of actions of surface groups on the plane in any degree of smoothness.

The proof of this theorem divides up into three cases, depending
on the dynamics of the action, and the proof in each case depends on constructions
and techniques particular to that context. We do not know of a single approach
which treats all of the cases on a unified footing. The
cases are as follows:

\begin{enumerate}
\item{One generator (call it $\alpha$) fixes $p$, and the orbit of $p$ under
powers of the other generator (call it $\beta$) is not proper}
\item{$\alpha$ fixes $p$, and the orbit of $p$ under $\beta$ is proper} 
\item{The action is free (i.e. no nontrivial element has a fixed point)}
\end{enumerate}

These cases are treated in subsequent sections.

\subsection{$\alpha$ fixes $p$, the orbit of $p$ under $\beta$ is not proper}

The simplest case is that $\alpha$ and (some power of) $\beta$ have a common
fixed point. Since the Euler class is multiplicative under coverings, we
can assume $\alpha$ and $\beta$ have a common fixed point. Projectivizing the
induced action on the tangent space at this point gives an action on 
the circle $\RP^1$. By the Milnor-Wood inequality (Theorem~\ref{circle_example})
The Euler class of this action vanishes.

Slightly more complicated is the case that $\alpha$ fixes some point $p$, and
there are integers $n_i \to \infty$ so that $p_i:=\beta^{n_i}$ converges to
some point $q$. Since $\alpha$ and $\beta$ commute, it follows that
$\alpha$ fixes each $p_i$, and therefore also $q$.

To analyze this case we introduce some technology which will be useful in what
follows.

\begin{notation}
Given distinct points $a,b \in \R^2$ and an isomorphism $\phi:T_a\R^2 \to T_b\R^2$,
define $\CC(a,b,\phi)$ to be the space of $C^1$ embeddings $f:I \to \R^2$ such that
$f(0) = a$ and $f(1) = b$, and satisfying $\phi f'(0) = f'(1)$.
\end{notation}

If $a,b, \phi$ are understood, we abbreviate $\CC(a,b,\phi)$ by $\CC$.

\begin{lemma}[Whitney]\label{writhe}
The set of path components $\pi_0(\CC)$ is an affine space for $\Z$, where $+1$
acts on $\pi_0(\CC)$ by a positive Dehn twist in a small annulus centered 
at the positive endpoint.
\end{lemma}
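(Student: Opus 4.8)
The plan is to classify $C^1$ embedded arcs from $a$ to $b$ with a prescribed derivative-matching condition up to isotopy through such arcs, and the natural invariant is a ``relative winding number'' of the tangent vector along the arc. First I would fix once and for all a background trivialization of $T\R^2$ (the standard one coming from $\R^2 = \R \times \R$), so that for any $C^1$ embedding $f \in \CC$ the unit tangent vector $f'(t)/|f'(t)|$ traces a path in $S^1$. Running parallel to this, I would fix an auxiliary embedded arc $\gamma_0$ from $a$ to $b$ and a framing of its normal bundle, so that "the" derivative-matching isomorphism $\phi$ can be compared against the one induced by $\gamma_0$; concretely, $\phi$ differs from the $\gamma_0$-comparison by some rotation, and only the mod-$2\pi$ part matters for membership in $\CC$. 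Given two elements $f_0, f_1 \in \CC$, form the loop obtained by concatenating $f_0$ with the reverse of $f_1$; this is an (immersed, generically) closed curve, and after the corner-rounding at $a$ and $b$ dictated by the condition $\phi f'(0) = f'(1)$ the total turning of the tangent vector is a well-defined integer. This integer is the obstruction, and the claim is that it is a complete invariant for $\pi_0(\CC)$ and that it is acted on simply transitively by $\Z$, realized geometrically by inserting a small positive Dehn twist (a little loop, i.e. a kink) near the endpoint $b$.

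The key steps, in order: (1) \emph{Well-definedness.} Show that two embeddings in the same path component of $\CC$ have the same turning-number difference; this is because along an isotopy $f_s$ the turning number varies continuously in $s$ and takes values in $\Z$, hence is constant — one must check that the embeddedness hypothesis is not needed here, only continuity of $s \mapsto f_s$ in the $C^1$ topology and the fact that the endpoint condition $\phi f_s'(0) = f_s'(1)$ forces the corner contributions to be locked. (2) \emph{Injectivity.} If two embeddings $f_0, f_1 \in \CC$ have the same turning number, produce an isotopy between them through elements of $\CC$. Here I would invoke the Whitney–Graustein theorem in its relative form: the space of immersions $I \to \R^2$ with prescribed $1$-jets at the endpoints has $\pi_0 = \Z$ detected by turning number, so $f_0$ and $f_1$ are \emph{regularly homotopic} rel endpoints. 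Then one promotes a regular homotopy between two \emph{embeddings} to an \emph{isotopy} — this is where embeddedness is used, and it is essentially the statement that in dimension $2$ a regular homotopy rel endpoints between embedded arcs with the same turning number can be replaced by an isotopy (Whitney's original argument, eliminating self-intersections in canceling pairs by finger moves, while watching that the arc stays embedded; genericity and the low-dimensional innermost-disk trick make the self-intersections removable). (3) \emph{$\Z$-action.} Define the action of $1 \in \Z$ by composing $f$ near $f(1) = b$ with a positive Dehn twist supported in a small annulus about $b$ — equivalently, splice in a small positively-oriented loop — and check this is well-defined on $\pi_0(\CC)$, changes turning number by exactly $+1$, and hence (by steps (1)–(2)) that $\pi_0(\CC)$ is a torsor under $\Z$.

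The main obstacle I expect is step (2), specifically the passage from a \emph{regular homotopy} rel endpoints to an honest \emph{isotopy} within the embedded arcs: the Whitney–Graustein theorem hands me a homotopy of immersions, but along that homotopy the arc may acquire and lose self-intersections, and I must argue these can be cancelled in pairs without ever leaving the world of embeddings, and without disturbing the fixed $1$-jet data at $a$ and $b$. In dimension $2$ this is classical (it underlies the fact that the mapping class group acts on arcs with the expected transitivity, and it is exactly the relative version of the statement that a simple closed curve of turning number $\pm 1$ bounds a disk), but making it clean requires a little care with the corner-smoothing conventions near the endpoints so that the turning-number bookkeeping in steps (1) and (3) is consistent. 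The remaining items — well-definedness and the effect of the Dehn twist on turning number — are direct computations with the total turning of a tangent vector field and should not pose difficulty.
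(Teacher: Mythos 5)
The paper itself offers no proof of this lemma: it is stated as classical, attributed to Whitney, and accompanied only by a figure, so there is no argument of record to compare yours against. Judged on its own terms, your outline follows the standard path: normalize a relative turning number so that the constraint $\phi f'(0)=f'(1)$ forces it to lie in a discrete coset of $\Z$; your step (1) (local constancy along a $C^1$-continuous path, embeddedness irrelevant) and step (3) (a positive twist supported in a small annulus about $b$ is a diffeomorphism equal to the identity near $b$, hence preserves membership in $\CC$ and raises the turning number by exactly one) are both correct and routine.

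The genuine gap is where you yourself point: step (2), the passage from the relative Whitney--Graustein regular homotopy to an isotopy through elements of $\CC$. The mechanism you propose --- ``eliminating self-intersections in canceling pairs by finger moves,'' in the spirit of the Whitney trick --- does not work in ambient dimension $2$: a generic regular homotopy between two embedded arcs passes through honestly immersed curves, and there is no general procedure for removing its double points while staying close to that homotopy; the assertion that the double points can always be avoided when the endpoints data and turning numbers agree is precisely the content of the lemma, so as written this step is circular. The standard repair does not go through immersions at all. One uses uniqueness of embedded arcs in the plane up to ambient isotopy rel endpoints (a $C^1$ Schoenflies-type statement, provable by an innermost-disk straightening argument) to carry $f_1$ onto an arc agreeing with $f_0$, keeps track of how the tangent data at $a$ and $b$ rotate during the ambient isotopy, and concludes that inside $\CC$ the two arcs differ only by twists inserted near the endpoints, which the turning number detects; this gives injectivity. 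Equivalently, one can view $\CC$ (up to the contractible magnitude condition) as the fiber of the evaluation map recording the relative tangent angle, defined on the space of embedded arcs rel endpoints with no tangent constraint; the connectivity and simple connectivity of that space (again a Schoenflies/Gramain-type input) then exhibit $\pi_0(\CC)$ as a torsor under $\pi_1(S^1)=\Z$, with the generator acting by the boundary twist. Either way, the essential ingredient is a Schoenflies-type theorem about embedded arcs, which your sketch gestures at but does not supply; replacing your finger-move step by one of these arguments makes the proof complete.
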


This operation is illustrated in Figure~\ref{add_one}.

\begin{figure}[ht]
\labellist
\small\hair 2pt
\pinlabel $\xrightarrow{\quad+1\quad}$ at 300 62
\endlabellist
\centering
\includegraphics[scale=0.4]{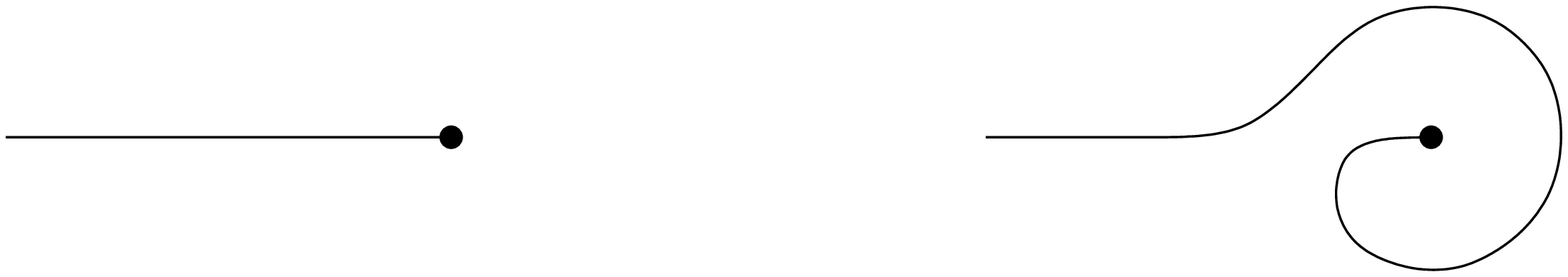}
\caption{The operation of $+1$ on $\pi_0(\CC)$}\label{add_one}
\end{figure}

Given an element $x \in \CC$ the class of $x$ in $\pi_0(\CC)$, denoted
$[x]$, is called the {\em writhe} of $x$.
Since $\pi_0(\CC)$ is an affine space for $\Z$, given elements $x,y \in \CC$ 
the difference of writhes $[x] - [y] \in \Z$ is well-defined.

\begin{lemma}\label{difference_is_Euler}
Let $\alpha,\beta$ be standard generators for $\pi_1(S_1) = \Z \oplus \Z$ acting
on $\R^2$ at least $C^1$ by $\rho$.
Let $p \in \fix(\alpha)$. Let $\tau \in \CC(p,\beta(p),d\beta)$ be
arbitrary. Then $\rho^*e([S_1]) = [\alpha(\tau)] - [\tau]$.
\end{lemma}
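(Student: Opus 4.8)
The plan is to use the graphical formula of Section~\ref{graphical_formula_subsubsection} applied to a cleverly chosen fundamental polygon. First I would take a fundamental polygon $P$ for the torus $S_1$ with the usual edge-word $\alpha\beta\alpha^{-1}\beta^{-1}$, and build a $C^1$ developing map $D_\rho:\til S_1\to\R^2$ by decreeing that one vertex of $P$ maps to $p\in\fix(\alpha)$, that the edge labelled $\beta$ develops to the chosen arc $\tau\in\CC(p,\beta(p),d\beta)$, and that the edge labelled $\alpha$ develops to some fixed $C^1$ arc $a$ from $p$ to $p$ (a loop based at $p$, embedded, immersed). Equivariance then forces the developing images of the other two edges: the edge $\alpha^{-1}$ must develop (traversed backwards) to $\beta(a)$, the pushforward of $a$ under $\rho(\beta)$, and the edge $\beta^{-1}$ must develop (backwards) to $\alpha(\tau)$, the pushforward of $\tau$ under $\rho(\alpha)$. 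So the developed boundary $D_\rho\partial P$ is the concatenation $a * \tau * \overline{\beta(a)} * \overline{\alpha(\tau)}$, a closed (generically immersed, with corners) curve in $\R^2$.

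Next I would compute the winding number of the smoothed curve $\delta$ obtained from this developed boundary. The key point is the behaviour at $p$ and the corner-rounding. Around the single vertex of the polygon, all four corners come together; the rounding $\delta$ of $D_\rho\partial P$ near that vertex introduces the multi-saddle correction already accounted for in the formula $\rho^*e([S_1]) = \wind(\delta) + 1 - 2g$, which for $g=1$ reads $\rho^*e([S_1]) = \wind(\delta) - 1$. The remaining task is purely the winding-number bookkeeping of the four-arc loop $a*\tau*\overline{\beta(a)}*\overline{\alpha(\tau)}$. Here I would invoke Lemma~\ref{writhe} (Whitney): the writhe $[\tau]\in\pi_0(\CC(p,\beta(p),d\beta))$ records exactly the turning of $\tau$ relative to the boundary data, and likewise $[\alpha(\tau)]$ records the turning of the arc $\alpha(\tau)$, which lies in $\CC(\alpha(p),\alpha\beta(p),d(\alpha\beta))=\CC(p,\beta(p),d\beta)$ since $\alpha$ fixes $p$ and commutes with $\beta$ (so $\alpha\beta(p)=\beta\alpha(p)=\beta(p)$ and the tangent data match). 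The contributions of $a$ and $\overline{\beta(a)}$ to the total winding number cancel, by the same token: $\beta(a)$ is the pushforward of $a$, and since $a$ is a loop at $p$ while $\beta(a)$ is a loop at $\beta(p)$, the difference of their turnings is again measured by an element of a $\CC$-type space, but crucially this difference is independent of the choice of $a$ and, when one sets up the accounting symmetrically, contributes $0$ to $\wind(\delta)-1$. What survives is precisely $[\alpha(\tau)]-[\tau]$.

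More carefully, the cleanest route is: fix a \emph{reference} developing map $D_0$ in which the $\beta$-edge develops to a fixed reference arc $\tau_0$ of writhe $0$; for that reference map a direct check (using that $\rho|_{\langle\alpha\rangle}$ has a fixed point, so its own Euler contribution is controlled as in the Milnor--Wood paragraph above) gives $\wind(\delta_0) - 1 = [\alpha(\tau_0)]-[\tau_0]$, and then one shows that replacing $\tau_0$ by an arbitrary $\tau$ changes $\wind(\delta)$ by exactly $([\tau]-[\tau_0]) - ([\alpha(\tau)]-[\alpha(\tau_0)])$-worth of Dehn twists, which rearranges to the claimed formula $\rho^*e([S_1]) = [\alpha(\tau)]-[\tau]$. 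Well-definedness of the right side is guaranteed because $\pi_0(\CC)$ is a $\Z$-torsor, so $[\alpha(\tau)]-[\tau]$ is an honest integer independent of all choices.

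I expect the main obstacle to be the winding-number bookkeeping at the vertex: making precise how the corner-smoothing of a four-fold corner interacts with the $+1$-operation of Lemma~\ref{writhe}, and verifying that the $a$-contributions genuinely cancel rather than leaving a residual constant. The cleanest way around this is to reduce everything to the reference developing map, where the vanishing of the $\alpha$-contribution follows from the Milnor--Wood argument already given (the projectivized tangent action of $\langle\alpha\rangle$ at $p$ has trivial Euler class), and then only track the \emph{difference} as $\tau$ varies, which is manifestly governed by the Dehn-twist action of $\Z$ on $\pi_0(\CC)$.
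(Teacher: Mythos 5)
Your chosen route (the graphical formula of \S~\ref{graphical_formula_subsubsection}) is viable in principle --- the paper itself remarks that the lemma can be deduced this way --- but your proposal never actually carries out the one step that constitutes the content of the lemma. In your first version, everything hinges on the assertion that the contributions of $a$ and $\overline{\beta(a)}$ to $\wind(\delta)$ ``cancel when one sets up the accounting symmetrically.'' They do not cancel termwise: since $\beta$ is only $C^1$ and not conformal, the turning of $\beta(a)$ differs from that of $a$ by the variation of the angle of $d\beta_{a(t)}(a'(t))$ against $a'(t)$ along $a$, and likewise the exterior angles at the smoothed corners at $\beta(p)$ are the $d\beta_p$-images of the configuration at $p$, hence not equal to the angles at $p$. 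The identity $\wind(\delta)-1=[\alpha(\tau)]-[\tau]$ only emerges after all of these terms are combined, and that bookkeeping (including the degenerate feature that the $\alpha$-edge develops to a closed loop at $p$, so two polygon vertices develop to the same point) is exactly what a proof must supply; asserting the outcome is circular.

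Your fallback via a ``reference developing map'' has the same problem plus two new ones. First, ``a reference arc $\tau_0$ of writhe $0$'' is not well defined: by Lemma~\ref{writhe}, $\pi_0(\CC)$ is only an affine space over $\Z$, with no canonical origin (the canonical coordinates of Lemma~\ref{writhes_agree} exist only when the homeomorphism is fixed-point free, which is not the situation here). Second, the ``direct check'' you invoke for the base case does not follow from the Milnor--Wood paragraph of the paper: that argument needs a \emph{common} fixed point of $\alpha$ and $\beta$ to get an action on $\RP^1$, whereas here only $\alpha$ is assumed to fix $p$; and the Euler class of the cyclic subgroup $\langle\alpha\rangle$ is vacuously zero ($H^2(\Z;\Z)=0$), which gives no information about $\rho^*e([S_1])$. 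Your observation that replacing $\tau$ by a twisted representative changes both sides equally only shows that the asserted identity is consistent as $\tau$ varies; the base case for some particular $\tau$ is still the whole lemma. For comparison, the paper's proof avoids all corner bookkeeping: a $1$-parameter family of curves in $\CC$ from $\tau$ to a curve differing from $\alpha(\tau)$ by $n=[\alpha(\tau)]-[\tau]$ endpoint twists sweeps out a fundamental domain, and the tangent data along the family trivializes $E'_\rho$ there with a single singularity of order $n$, so the obstruction class evaluates to $n$. If you want to keep the graphical-formula route, you must do the turning-number computation honestly: write $2\pi\wind(\delta)$ as the sum of the four arc turnings and the smoothed exterior angles, and verify that the $d\beta$-variation along $a$ cancels against the corner discrepancies at $p$ versus $\beta(p)$, leaving $2\pi([\alpha(\tau)]-[\tau])+2\pi$.
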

\begin{proof}
A $1$-parameter family of smooth curves $\tau_t \in \CC$ from $\tau$ to $\alpha(\tau)$
pulls back to define a trivialization of $E'_\rho$ over a fundamental domain. The
obstruction to this trivialization is $\rho^*e([S_1])$. If $[\alpha(\tau)] - [\tau] = n$
then there is a family $\tau_t$ from $\tau$ to $\tau'$ which differs from $\alpha(\tau)$
only by $n$ twists at one endpoint. This family defines a trivialization of $E'_\rho$
with a singularity of order $n$ at one point.
\end{proof}

\begin{remark}
This also follows from the graphical formula for the Euler class, described in
\S~\ref{graphical_formula_subsubsection}.
\end{remark}

In our context, we have fixed points $p_i,p_j$ of $\alpha$ which are very close to $q$.
Let $\tau$ be a smooth curve from $p_i$ to $p_j$ with $d\beta^{j-i}\tau'(0) = \tau'(1)$.
So $\tau \in \CC(p_i,p_j,d\beta^{j-i})$. Since $\alpha$ is $C^1$,
if $p_i,p_j$ are close enough, the action of $\alpha$ near $q$ is $C^1$ close to a linear
action, so $\tau$ and $\alpha(\tau)$ are $C^1$ close. In particular, $[\alpha(\tau)] - [\tau] = 0$
and therefore the Euler class is zero on the fundamental class of the group generated by
$\alpha$ and $\beta^{j-i}$. But this is $(j-i)$ times $\rho^*e([S_1])$, which must
therefore vanish.

\subsection{$\alpha$ fixes $p$, the orbit of $p$ under $\beta$ is proper}

Let $\tau \in \CC(p,\beta(p),d\beta)$, so that
$\rho^*e([S_1]) = [\alpha(\tau)] - [\tau]$, by Lemma~\ref{difference_is_Euler}.
Choose $\tau$ such that $\tau$ does not intersect $\beta^i(p)$ for any $i$ except
at the endpoints $p$ and $\beta(p)$. Since the orbit $\beta^i(p)$ is proper (and in
any case, countable) this is easy to achieve.

For each integer $i$, we define $a_i$ by the formula
$$a_i = \begin{cases} \alpha(\tau) \cdot \beta^i(\tau) - \tau \cdot \beta^i(\tau) & \text{ if } i \ne 0  \\
0 & \text{ if } i=0 \\ \end{cases}$$
where $\cdot$ denotes intersection number. For $|i|>1$ the endpoints of 
$\alpha(\tau)$ or $\tau$ and of $\beta^i(\tau)$ are disjoint, so this is well-defined.
For $i=1$ we must be careful, since {\it a priori} $\alpha(\tau)$ and $\beta(\tau)$ might
have infinitely many points of intersection near $\beta(p)$, and similarly for $\alpha(\tau)$
and $\beta^{-1}(\tau)$. We replace $\alpha(\tau)$ with $\delta$ which agrees with $\alpha(\tau)$
outside of a small neighborhood of $p \cup \beta(p)$, which agrees with $\tau$ in a smaller
neighborhood of $p \cup \beta(p)$, and which satisfies $[\alpha(\tau)] - [\delta] = 0$.
Notice that $\delta \cdot \beta^i(\tau) = \alpha(\tau) \cdot \beta^i(\tau)$ for
$|i|>1$ if $\alpha(\tau)$ and $\delta$ agree outside a neighborhood of $p \cup \beta(p)$
that does not contain any $\beta^i(p)$ with $i \ne 0,1$.

\begin{lemma}\label{coefficients_vanish}
For sufficiently large $|i|$ we have $a_i = 0$.
\end{lemma}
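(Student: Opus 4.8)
The plan is to show that the quantities $a_i$, which record how the intersection pattern of $\alpha(\tau)$ with the translates $\beta^i(\tau)$ differs from that of $\tau$ with the same translates, are eventually zero because the relevant arcs become disjoint for large $|i|$. First I would use the hypothesis that the orbit $\{\beta^i(p)\}$ is proper: this means that for any compact set $K \subset \R^2$, only finitely many $\beta^i(p)$ lie in $K$. Fix a compact set $K$ containing the arcs $\tau$, $\alpha(\tau)$ (or rather the modified arc $\delta$), since these are images of the compact interval $I$ under continuous maps, hence compact. For $|i|$ large enough, $\beta^i(p) \notin K$; but then $\beta^i(\tau)$, being an arc from $\beta^i(p)$ to $\beta^{i+1}(p)$, need not itself avoid $K$, so a little more care is required.

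The key additional input is that $\beta$ acts properly discontinuously in the relevant sense — more precisely, I would argue that the family of arcs $\{\beta^i(\tau)\}$ is \emph{locally finite}. Indeed $\beta^i(\tau) = \beta^i(\tau)$ is the image of the fixed compact arc $\tau$ under the homeomorphism $\beta^i$; since $\langle \beta \rangle \cong \Z$ acts on $\R^2$ and the orbit of the single point $p$ is proper, and $\tau$ is a compact arc with one endpoint at $p$, the translates $\beta^i(\tau)$ accumulate only at infinity. Formally: if infinitely many $\beta^i(\tau)$ met a fixed compact $K$, then passing to a subsequence we would find points $x_i \in \tau$ with $\beta^{i}(x_i) \to x_\infty \in K$; applying $\beta^{-1}$ and using continuity together with compactness of $\tau$ one extracts a contradiction with properness of the orbit of a point (here I would use that $\tau$ is covered by finitely many small arcs on each of which the dynamics is controlled, or alternatively invoke that a properly discontinuous argument applies once we know $\langle\beta\rangle$ acts freely near $p$ on $\tau$, which holds since $\tau$ meets the orbit only at its endpoints). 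Thus for $|i|$ sufficiently large, $\beta^i(\tau)$ is disjoint from the compact set $K \supset \tau \cup \delta$, whence $\delta \cdot \beta^i(\tau) = \tau \cdot \beta^i(\tau) = 0$ and so $a_i = \delta\cdot\beta^i(\tau) - \tau\cdot\beta^i(\tau) = 0$.

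I expect the main obstacle to be making the local-finiteness claim rigorous: properness of the \emph{orbit of a point} does not automatically give local finiteness of the \emph{translates of an arc} for a general homeomorphism action, so one must exploit either that $\beta$ is $C^1$ (so that near each point of $\tau$ it is close to linear and the dynamics of a single $\Z$-action is tractable) or that $\tau$ avoids the orbit except at endpoints (so $\langle\beta\rangle$ acts freely on $\tau$, and freeness plus the $\Z$ action on the plane yields wandering behavior). A clean way to organize this: observe $\beta^i(\tau)$ is an arc from $\beta^i(p)$ to $\beta^{i+1}(p)$, and if it met $K$ for infinitely many $i$ then the arcs $\beta^i(\tau)$ would, by a connectedness argument, have to "reach back" toward $K$ from points $\beta^i(p)$ escaping to infinity — but the arcs all have the same "length" in the sense of being a single homeomorphic image of $\tau$, and one can make this quantitative by choosing a proper exhaustion adapted to the escaping orbit. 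Once local finiteness is in hand, the conclusion $a_i = 0$ for large $|i|$ is immediate from the definition of intersection number between arcs with disjoint supports.
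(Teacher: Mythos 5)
There is a genuine gap, and it is exactly at the point you flag as ``the main obstacle'': the local-finiteness claim is not merely hard to make rigorous, it is false in general, so the route through geometric disjointness cannot be completed. Properness of the orbit of the single point $p$ says nothing about the other points of $\tau$: the commuting element $\beta$ may perfectly well have fixed points or bounded orbits elsewhere in the plane (nothing in the hypotheses forbids this), and if $\tau$ passes through a point $x$ whose $\beta$-orbit stays in a compact set, then every translate $\beta^i(\tau)$ meets a fixed compact neighborhood of that orbit, so infinitely many $\beta^i(\tau)$ intersect any compact $K\supset \tau\cup\delta$. Neither of your proposed repairs closes this: $C^1$-regularity only controls $\beta$ locally and says nothing about long-time dynamics away from $p$, and the fact that $\tau$ meets the orbit of $p$ only at its endpoints does not make $\langle\beta\rangle$ act ``wanderingly'' on all of $\tau$. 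So the step ``$\beta^i(\tau)$ is disjoint from $K$ for large $|i|$, hence $a_i=0$'' does not hold as stated.

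The paper sidesteps geometric disjointness entirely by working with \emph{algebraic} intersection numbers against a closed cycle. Since $\alpha$ fixes $p$ and commutes with $\beta$, the arcs $\tau$ and $\alpha(\tau)$ share both endpoints, so $\alpha(\tau)\cup\bar\tau$ is a closed oriented loop, and by definition
$$a_i=(\alpha(\tau)\cup\bar\tau)\cdot\beta^i(\tau).$$
The algebraic intersection number of a closed loop with an arc depends only on the winding numbers of the loop about the arc's two endpoints; properness of the orbit of $p$ (which is all that is assumed) guarantees that for large $|i|$ both endpoints $\beta^i(p),\beta^{i+1}(p)$ lie outside a big disk containing the loop, where those winding numbers vanish, so $a_i=0$ even though $\beta^i(\tau)$ may still cross $\tau$ and $\alpha(\tau)$ many times — the crossings simply cancel in pairs. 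If you want to salvage your write-up, replace the disjointness argument by this homological cancellation; as it stands, the conclusion is drawn from a premise that the hypotheses do not supply.
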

\begin{proof}
The union $\alpha(\tau) \cup \bar{\tau}$ (i.e. $\tau$ with the opposite orientation)
is a closed, oriented loop. Since the orbit $\beta^i(p)$ is proper, for sufficiently large
$|i|$ the points $\beta^i(p),\beta^{i+1}(p)$ are outside a big disk containing
$\alpha(\tau) \cup \bar{\tau}$ and therefore
$$0 = (\alpha(\tau) \cup \bar{\tau}) \cdot \beta^i(\tau) = \alpha(\tau)\cdot \beta^i(\tau) - \tau \cdot \beta^i(\tau) = a_i$$
\end{proof}

The next Lemma gives a formula for the Euler class in terms of the $a_i$, thus relating
the Euler class to the dynamics of the action.

\begin{lemma}\label{signed_sum_is_euler}
$$\rho^*e([S_1]) = \sum_{i>0} a_i - \sum_{i<0} a_i$$
\end{lemma}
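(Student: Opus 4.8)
The plan is to compute $\rho^*e([S_1]) = [\alpha(\tau)] - [\tau]$ (Lemma~\ref{difference_is_Euler}) by finding an explicit $1$-parameter family of curves in $\CC(p,\beta(p),d\beta)$ connecting $\tau$ to $\alpha(\tau)$, or rather to a curve differing from $\alpha(\tau)$ by a controlled number of Dehn twists, and reading off that number from the intersection data $a_i$. The key geometric input is that $\alpha$ commutes with $\beta$, so $\alpha$ permutes the orbit $\{\beta^i(p)\}_{i \in \Z}$ fixing each point (since $\alpha(p)=p$ and $\alpha\beta^i(p)=\beta^i\alpha(p)=\beta^i(p)$). Thus $\alpha(\tau)$ is again a curve from $p$ to $\beta(p)$, and the difference in writhe between $\alpha(\tau)$ and $\tau$ is detected by how $\alpha(\tau)$ winds around the orbit points relative to how $\tau$ does. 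The intersection numbers $a_i = \alpha(\tau)\cdot\beta^i(\tau) - \tau\cdot\beta^i(\tau)$ precisely measure this: each $\beta^i(\tau)$ is a probe arc from $\beta^i(p)$ to $\beta^{i+1}(p)$, and counting signed crossings of $\alpha(\tau)$ versus $\tau$ with this probe records the local winding of $\alpha(\tau)$ around the two endpoints of $\beta^i(\tau)$.

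First I would set up the homological bookkeeping. Consider the closed oriented loop $L = \alpha(\tau) \cup \bar\tau$; it bounds a $2$-chain, and for each $i$ the arc $\beta^i(\tau)$ has endpoints on the orbit. The intersection number of $L$ with $\beta^i(\tau)$ is $a_i$ for $i\neq 0,\pm 1$, and one checks (exactly as in Lemma~\ref{coefficients_vanish}) that $\sum_i a_i$ is a finite sum. Next, to relate $\sum_{i>0}a_i - \sum_{i<0}a_i$ to the writhe difference, I would build the homotopy incrementally: start with $\tau$ and, for $i = 1, 2, 3, \dots$, homotope across the arc $\beta^i(\tau)$ one at a time so that after finitely many steps (using Lemma~\ref{coefficients_vanish}) the curve agrees with $\alpha(\tau)$ outside a neighborhood of $p\cup\beta(p)$; the analogous moves with $i<0$ handle the crossings on the other side. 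Each such elementary move — sliding a strand of the curve across a probe arc — changes the homotopy class in $\CC$ by a number of Dehn twists equal to $\pm a_i$ (with the sign determined by whether the probe sits on the $\beta^{>0}$ or $\beta^{<0}$ side of the orbit, which is where the asymmetry in the formula comes from). Summing the contributions and invoking that $[\alpha(\tau)] - [\delta] = 0$ and the correction near $p\cup\beta(p)$ contributes nothing, one gets $[\alpha(\tau)] - [\tau] = \sum_{i>0}a_i - \sum_{i<0}a_i$.

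The main obstacle I expect is making the elementary-move step rigorous: one must verify that sliding the curve across a single probe arc $\beta^i(\tau)$ really does change the writhe by exactly $a_i$, with the correct sign, rather than by some other intersection-theoretic quantity. This requires care because the probe arcs $\beta^i(\tau)$ are not disjoint from each other (consecutive ones share endpoints), and because the arc $\tau$ was only chosen to avoid the orbit points $\beta^i(p)$, not to be in any kind of minimal position with respect to its $\beta$-translates. So I would want to first isotope $\tau$ rel endpoints and rel the orbit so that the collection $\{\alpha(\tau)\}\cup\{\beta^i(\tau) : i\in\Z\}$ is in general position away from the orbit points, and near each orbit point $\beta^i(p)$ use the $C^1$ structure (as in the previous subsection) to control the local picture — replacing $\alpha(\tau)$ by the auxiliary curve $\delta$ handles the $i=\pm1$ anomaly. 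A secondary subtlety is orientation conventions: the sign in front of $\sum_{i<0}a_i$ must match the convention that $+1 \in \Z$ acts on $\pi_0(\CC)$ by a \emph{positive} Dehn twist at the \emph{positive} endpoint (Lemma~\ref{writhe}), and tracking this through the incremental homotopy is the delicate point of the argument. Once these positional and sign issues are pinned down, the summation itself is formal.
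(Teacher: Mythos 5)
Your reduction is the right one --- using Lemma~\ref{difference_is_Euler} to replace $\rho^*e([S_1])$ by the writhe difference $[\alpha(\tau)]-[\tau]$, modifying $\alpha(\tau)$ to $\delta$ near $p\cup\beta(p)$ to tame the $i=\pm1$ intersections, and deforming within $\CC(p,\beta(p),d\beta)$ to a curve that is $\tau$ plus a controlled number of twists --- and this matches the shape of the paper's argument. But the step where you convert intersection data into a twist count is a genuine gap. You assert that each ``elementary move,'' sliding a strand across the probe arc $\beta^i(\tau)$, changes the class in $\pi_0(\CC)$ by $\pm a_i$ Dehn twists. If your moves are paths in $\CC$ (isotopies through embedded arcs with the given endpoint and tangent conditions), then the class in $\pi_0(\CC)$ does not change at all: the arcs $\beta^i(\tau)$ are not obstructions, and sweeping across them has no effect whatsoever on the writhe. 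If the moves are not paths in $\CC$, you have not said what they are, and I do not see a correct statement of the form ``one slide across $\beta^i(\tau)$ contributes $\pm a_i$ twists''; a single slide generically changes intersection counts by $\pm1$ per strand, not by $a_i$. So the bridge from the $a_i$ to $[\alpha(\tau)]-[\tau]$ is asserted rather than proved, and the asserted mechanism is not one that can be made true as stated.

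What is missing is the invariance argument that the paper uses in place of your per-move bookkeeping. For a family $\delta_t$ in $\CC$ with $\delta_0=\delta$, held fixed near $p$ and $\beta(p)$, set $a_i(t)=\delta_t\cdot\beta^i(\tau)-\tau\cdot\beta^i(\tau)$ and $R(t)=\sum_{i>0}a_i(t)-\sum_{i<0}a_i(t)$ (finite by Lemma~\ref{coefficients_vanish}). Along a generic such path the individual $a_i$ change only at the finitely many times when $\delta_t$ sweeps across some orbit point $\beta^i(p)$; since $\delta_t$ is embedded with endpoints $p$ and $\beta(p)$, this can only happen for $i\neq 0,1$, and such a crossing changes $a_i$ by $\pm1$ and $a_{i-1}$ by $\mp1$. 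Because $i$ and $i-1$ then lie on the same side of $0$, these cancel in $R$, so $R$ is constant along the whole deformation --- no general position or minimal position hypotheses on the family $\{\beta^i(\tau)\}$ are needed beyond transversality at generic times. One then deforms $\delta$ to a curve agreeing with $\tau$ except for $n=[\alpha(\tau)]-[\tau]$ twists in a small annulus at an endpoint and evaluates $R=n$ there directly; the asymmetric combination $\sum_{i>0}-\sum_{i<0}$ is exactly what makes a positive twist concentrated at either $p$ or $\beta(p)$ count as $+1$, which is the true source of the sign pattern, rather than a sign attached to each probe. With the constancy-of-$R$ argument substituted for your elementary-move claim, the rest of your outline (finiteness, the $\delta$ correction, the final evaluation) goes through as in the paper.
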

\begin{proof}
Note by Lemma~\ref{coefficients_vanish} that the sum on the right hand side is finite,
and therefore well-defined. By Lemma~\ref{difference_is_Euler}, the left hand side is
equal to the difference in writhes $[\alpha(\tau)] - [\tau]$. Set
$\delta_0 = \delta$ (as in the definition of the $a_i$ above)
and let $\delta_t$ denote a (suitable) $1$-parameter family of curves
in $\CC$ which are constant in a sufficiently small neighborhood of $p$ and $\beta(p)$.
For each $\delta_t$ which does not pass through any $\beta^j(p)$
we can define $a_i(t) := \delta_t \cdot \beta^i(\tau) - \tau \cdot \beta^i(\tau)$ and
therefore the sum $R(t) := \sum_{i > 0} a_i(t) - \sum_{i<0} a_i(t)$.

Since the orbit $\beta^j(p)$ is proper, there are only finitely many values of $t$
when a generic family $\delta_t$ passes through some $\beta^i(p)$. Since each $\delta_t$
is {\em embedded}, it can only pass through $\beta^i(p)$ where $i \ne 0,1$. When it passes
through such a point, $a_i$ changes by $\pm 1$ and $a_{i-1}$ changes by $\mp 1$, so
$R(t)$ stays {\em constant}.

After a suitable deformation, we can assume $\delta_1$ agrees with $\tau$ except in
a small annulus neighborhood of $p$ (or $\beta(p)$) where it differs by the $n$th power of
a Dehn twist, where $n = [\alpha(\tau)] - [\tau]$. It follows that $a_i(1) = 0$ for
all $i \ne 1$, and $a_1(1) = n$. Hence 
$$\rho^*e([S_1]) = [\alpha(\tau)] - [\tau] = n = R(1) = R(0) = \sum_{i>0} a_i - \sum_{i<0} a_i$$
as required.
\end{proof}

The proof in the second case is completed by a covering trick.
Fix some large integer $n$, and define $B = \beta^{2n+1}$. Replace
$\tau$ by $T:= \cup_{i=-n}^{n} \beta^i(\tau)$ and $\alpha(\tau)$ by
$\alpha(T)$. Define coefficients $A_j:= \alpha(T) \cdot B^j(T) - T \cdot B^j(T)$.
The group $\Z \oplus \Z$ generated by $\alpha$ and $B$ is index $2n+1$ in the
group generated by $\alpha$ and $\beta$. It follows from Lemma~\ref{signed_sum_is_euler}
that there is a formula
$$(2n+1)\rho^*e([S_1]) = \sum_{i>0} A_i - \sum_{i<0} A_i$$
However, it is also true that
\begin{align*}
A_j & = \left( \bigcup_{i=-n}^n \alpha\beta^i(\tau) - \beta^i(\tau) \right) \cdot
\left( \bigcup_{i=j(2n+1)-n}^{j(2n+1)+n} \beta^i(\tau) \right)  \\
& = \left( \alpha(\tau) - \tau \right) \cdot \left( \bigcup_{k=-n}^n \bigcup_{i=j(2n+1)-n}^{j(2n+1)+n} \beta^{i-k}(\tau) \right) \\
& = \sum_{i = -2n}^{2n} (2n+1-|i|)(a_{j(2n+1)+i})
\end{align*}
and therefore
$$(2n+1)\rho^*e([S_1]) = \sum_i X_n(i)a_i$$
where
$$X_n(i) = \begin{cases} i & \text{ if } |n|< 2n+1 \\ 2n+1 & \text{ if } n \ge 2n+1 \\ -2n-1 & \text{ if } n \le -2n-1 \\
\end{cases}$$
Part of the graph of $X_3$ is depicted in Figure~\ref{graph}.

\begin{figure}[ht]
\labellist
\small\hair 2pt
\endlabellist
\centering
\includegraphics[scale=0.5]{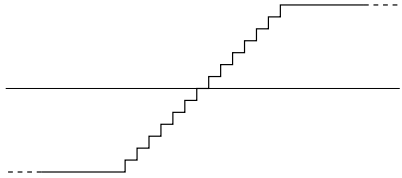}
\caption{The graph of $X_3$}\label{graph}
\end{figure}

Notice that for any $i$ there is an equality $\lim_{n \to \infty} X_n(i) = i$ and therefore
$$\lim_{n \to \infty} (2n+1) \rho^*e([S_1]) = \sum_i ia_i$$
which is a {\em finite sum}, by Lemma~\ref{coefficients_vanish}. In other words, the right hand
side is eventually constant, and therefore the left hand side is identically zero, as we needed to
show.

\subsection{The action is free}

In this case we must cite Theorems of Brouwer and Brown.

\begin{theorem}[Brouwer \cite{Brouwer}]\label{Brouwer_theorem}
If $\alpha \in \homeo^+(\R^2)$ is fixed-point free, the orbit of every point
is proper. Moreover, for any point $p$ there is an arc $\sigma$ from $p$ to $\alpha(p)$
such that the union of the translates $\cup_i \alpha^i(\tau)$ is an embedded copy
of $\R$ in $\R^2$.
\end{theorem}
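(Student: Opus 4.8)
The plan is to reduce the whole statement to the classical \emph{Brouwer lemma on translation arcs}. Call an embedded arc $\sigma$ from $p$ to $\alpha(p)$ a \emph{translation arc} for $\alpha$ if $\sigma\cap\alpha(\sigma)=\{\alpha(p)\}$. I would establish: (a) through every point there passes a translation arc (possible since $\alpha(p)\ne p$ for every $p$); and (b) if $\sigma$ is a translation arc, then the translates $\alpha^i(\sigma)$, $i\in\Z$, are pairwise disjoint except that $\alpha^i(\sigma)$ and $\alpha^{i+1}(\sigma)$ share the single point $\alpha^{i+1}(p)$. Granting (a) and (b) for a translation arc $\sigma$ through $p$, the increasing union $L:=\bigcup_i\alpha^i(\sigma)$ is an injectively immersed line on which $\alpha$ acts as a fixed-point-free homeomorphism; combined with the freeness of $\alpha$ established below, this gives that $L$ is \emph{properly} embedded, hence an embedded copy of $\R$, and that the orbit of $p$ is proper. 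Running the same argument through an arbitrary point $q$ then shows that every orbit is proper. So the real content is: (a), (b), and the passage to properness.

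For (a): since $\alpha$ is fixed-point free we have $\alpha(p)\ne p$, so there is a small closed disk $D$ about $p$ with $\alpha(D)\cap D=\emptyset$; beginning from any embedded arc joining $p$ to $\alpha(p)$ and successively excising innermost pieces where it meets its own $\alpha$-translate yields, after finitely many surgeries, a translation arc. This step is elementary and I would not dwell on it.

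Step (b) is the heart of the matter, and is the step I expect to be the main obstacle. Using equivariance, and passing to $\alpha^{-1}$ as well, it suffices to show $\alpha^n(\sigma)\cap\sigma=\emptyset$ for every $n\ge 2$. I would argue by contradiction, taking $n\ge 2$ minimal with $\alpha^n(\sigma)\cap\sigma\ne\emptyset$. Minimality and equivariance force the translates $\sigma,\alpha(\sigma),\dots,\alpha^{n-1}(\sigma)$ to fit together into a single embedded arc $\Gamma$ from $p$ to $\alpha^n(p)$, consecutive ones meeting only at a shared endpoint; a little bookkeeping then shows $\alpha^n(\sigma)$ can meet $\Gamma$ only inside $\sigma$ and at the endpoint $\alpha^n(p)$. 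Following $\alpha^n(\sigma)$ from $\alpha^n(p)$ to its first return to $\sigma$ and closing up with the intervening sub-arc of $\Gamma$ produces a Jordan curve $J\subset\Gamma\cup\alpha^n(\sigma)$ bounding a closed disk $\Delta$. The delicate point — and this is the obstacle — is to analyze how $\alpha$ moves $\partial\Delta$: using that $\alpha$ is orientation preserving and that $\sigma$ meets $\alpha(\sigma)$ in a single point, one shows that $\alpha$ (or $\alpha^{-1}$) carries $\Delta$ into its own interior. Brouwer's fixed-point theorem — a self-homeomorphism of the plane carrying a closed disk into itself has a fixed point there — then produces a fixed point of $\alpha$, a contradiction. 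This is the technical core of Brouwer's original argument; a streamlined treatment is due to M.~Brown, and I would follow that.

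With (b) in hand the remainder is routine. First, $\alpha$ has no periodic orbit: if $\alpha^m(q)=q$ with $m\ge 2$, then for a translation arc $\tau$ at $q$ the arc $\alpha^m(\tau)$ runs from $q$ to $\alpha(q)$, hence meets $\tau$ at both of $\tau$'s endpoints, contradicting (b). Next, repeating the disk-trapping argument of (b) for a translation arc at $q$ shows $\alpha$ is \emph{free}: every point has a neighborhood $V$ with $\alpha^k(V)\cap V=\emptyset$ for all $k\ne 0$. A free homeomorphism has only proper orbits (two iterates of $q$ both lying in a common such $V$ is impossible), which is the first assertion. Finally, proper orbits together with (b) force $L=\bigcup_i\alpha^i(\sigma)$ to be closed in $\R^2$, hence properly embedded and homeomorphic to $\R$, which is the second assertion.
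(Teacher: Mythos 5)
The paper offers no proof of this theorem at all: it is quoted as a classical result of Brouwer, with Brown's paper cited separately for the arc criterion, so there is nothing internal to compare your argument with. Judged on its own, your outline through step (b) is the standard route to Brouwer's translation--arc lemma: translation arcs exist through every point, the disk-trapping/Jordan curve argument plus the Brouwer fixed point theorem rules out $\alpha^n(\sigma)\cap\sigma\neq\emptyset$ for $n\ge 2$, the free-disk property follows, and properness of every orbit is then immediate (two iterates in one free disk is impossible). You defer the genuinely hard core of (b) to Brown's streamlined proof, which is acceptable for a sketch, though it means the proposal is largely a reduction to the cited literature rather than an independent argument.

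The genuine gap is in your final paragraph. You assert that properness of orbits together with (b) forces $L=\bigcup_i\alpha^i(\sigma)$ to be closed in $\R^2$, hence properly embedded. That inference is invalid: (b) controls only the mutual intersections of the translates, and properness of the orbit only says the endpoints $\alpha^i(p)$ leave every compact set; the arcs $\alpha^i(\sigma)$ themselves can have diameters tending to infinity and can accumulate on a compact set even while their endpoints escape. Indeed there are fixed-point-free orientation-preserving homeomorphisms of the plane whose translation lines are not properly embedded --- this is exactly the caveat the paper records immediately after the theorem --- so the strengthened conclusion is not just unproved but false in general. This matters for the statement you are actually asked to prove: the theorem claims only that $L$ is an \emph{embedded} copy of $\R$, i.e.\ that $L$ does not accumulate on itself, and you obtain this solely as a corollary of the false closedness claim. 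Pairwise disjointness of the translates gives an injective continuous parametrization of $L$ by $\R$, but it does not by itself rule out self-accumulation; that requires a further argument, and it is essentially the content of Brown's criterion quoted as Theorem~\ref{Brown_theorem}. So as written the proposal establishes properness of orbits (modulo the deferred technical core of (b)) but leaves the second assertion of the theorem without a valid proof, and the particular route you chose to close it cannot work.
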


The caveat to Theorem~\ref{Brouwer_theorem} is that the copy of $\R$ obtained may
not be {\em properly} embedded. 

\begin{definition}
Let $\alpha \in \homeo^+(\R^2)$ be fixed-point free. An embedded arc $\tau$ from $p$ to
$\alpha(p)$ with the property that $\cup_i \alpha^i(\tau)$ is an embedded copy of $\R$
is called a {\em free arc}.
\end{definition}

Brown gave a useful criterion for an arc to be free.

\begin{theorem}[Brown \cite{Brown}]\label{Brown_theorem}
If $\alpha \in \homeo^+(\R^2)$ is fixed-point free, and $\tau$ is an embedded arc
from $p$ to $\alpha(p)$ for which $\tau \cap \alpha(\tau) = \alpha(p)$ then
$\tau$ is a free arc.
\end{theorem}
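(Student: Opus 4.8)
The plan is to argue by contradiction and to reduce the statement to the combinatorial-topological assertion that $\tau\cap\alpha^k(\tau)=\emptyset$ whenever $|k|\ge 2$. Granting this, the translates $\alpha^i(\tau)$ assemble into an injective path $\R\to\R^2$: consecutive arcs $\alpha^i(\tau)$ and $\alpha^{i+1}(\tau)$ meet only in the common endpoint $\alpha^{i+1}(p)$ (this is the hypothesis, pushed forward by $\alpha^i$), non-consecutive arcs are disjoint, and the points $\alpha^i(p)$ are pairwise distinct because $\alpha$ --- hence every nonzero power of $\alpha$ --- is fixed-point free by Theorem~\ref{Brouwer_theorem} (a periodic point would have a bounded, hence non-proper, orbit). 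That this injective path is moreover an embedding of $\R$, not in general a closed one (which is the caveat), follows from the standard strengthening of Theorem~\ref{Brouwer_theorem} to the effect that every compact set, and not just every point, has proper orbit under a fixed-point-free plane homeomorphism. Finally, since the reverse of $\alpha^{-1}(\tau)$ is an arc from $p$ to $\alpha^{-1}(p)$ satisfying the same hypothesis relative to $\alpha^{-1}$, the cases $k\le -2$ follow from the cases $k\ge 2$ applied to that pair; so it is enough to prove $\tau\cap\alpha^k(\tau)=\emptyset$ for $k\ge 2$.

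Suppose not, and let $n\ge 2$ be least with $\tau\cap\alpha^n(\tau)\ne\emptyset$. First I would verify that $T:=\bigcup_{i=0}^{n-1}\alpha^i(\tau)$ is an embedded arc from $p$ to $\alpha^n(p)$: for $0\le a<b\le n-1$ the intersection $\alpha^a(\tau)\cap\alpha^b(\tau)$ is a translate of $\alpha^{b-a}(\tau)\cap\tau$, which is empty because $2\le b-a\le n-1$ and $n$ is minimal, while consecutive pieces meet only in endpoints. Next I would exploit the nonempty set $\tau\cap\alpha^n(\tau)\subseteq T$ together with the fact that the arc $\alpha^n(\tau)$ issues from the endpoint $\alpha^n(p)$ of $T$: travelling along $\alpha^n(\tau)$ from its other endpoint $\alpha^{n+1}(p)$ until it first meets $T$ produces a sub-arc of $\alpha^n(\tau)$ meeting $T$ in exactly that first point, which by minimality of $n$ lies on $\tau$; concatenating this sub-arc with a suitable sub-arc of $\tau$ (and, if necessary, a translate of $T$ in order to close up) yields an embedded loop $J$ bounding a closed topological disk $D$, whose boundary is assembled from a sub-arc of $\tau$ and a sub-arc of $\alpha^n(\tau)$.

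The decisive step is then to show $\alpha^n(\overline D)\subseteq\overline D$ (or, symmetrically, $\alpha^{-n}(\overline D)\subseteq\overline D$). Minimality of $n$ controls how the translates $\alpha^i(\tau)$ can cross $J$, and the orientation-preserving hypothesis pins down on which side of $J$ the image $\alpha^n(J)$ lies, so that $\alpha^n(\overline D)$ is trapped inside $\overline D$. Given this, either Brouwer's fixed point theorem applied to $\alpha^n|_{\overline D}:\overline D\to\overline D$ produces a fixed point of $\alpha^n$, contradicting fixed-point-freeness, or --- equivalently --- the forward $\alpha^n$-orbit of any interior point of $D$ remains in the compact set $\overline D$, contradicting Theorem~\ref{Brouwer_theorem}. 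Either way no such $n$ exists, which is what we wanted.

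The hard part is the preceding two paragraphs: producing the Jordan curve and checking the containment $\alpha^n(\overline D)\subseteq\overline D$. The difficulty is that the long arcs $T$ and $\alpha(T)$ overlap in the pieces $\alpha^i(\tau)$, $1\le i\le n-1$, so no naive concatenation of $\tau$-arcs and $\alpha^n(\tau)$-arcs with their translates is automatically embedded; one must choose first-intersection points along the arcs with care and appeal repeatedly to the Jordan curve theorem, both to locate $D$ and to track the position of its $\alpha^n$-image. This short but genuinely delicate bookkeeping is the substance of Brown's paper, and it is the only place where the orientation-preserving hypothesis is used in an essential way.
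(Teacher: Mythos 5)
This statement is not proved in the paper at all --- it is quoted as a known result with a citation to Brown --- so the only question is whether your argument stands on its own, and it does not. The reduction in your first paragraph (it suffices to show $\tau\cap\alpha^k(\tau)=\emptyset$ for $|k|\ge 2$, with negative $k$ following from positive $k$) is the standard opening move: that assertion is precisely Brouwer's translation-arc lemma, which is the theorem Brown's paper reproves. But the entire content lies in the two steps you then defer: the construction of the Jordan curve $J$ with boundary assembled from a sub-arc of $\tau$ and a sub-arc of $\alpha^n(\tau)$, and above all the ``decisive step'' $\alpha^n(\overline D)\subseteq\overline D$. You assert that minimality of $n$ and orientation-preservation ``pin down'' this containment, and then concede that this bookkeeping ``is the substance of Brown's paper.'' That is an acknowledgement that the core of the proof is missing, not a proof. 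Worse, the deferred claim is doubtful as stated: there is no reason for $\alpha^n$ to map the disk $\overline D$ into itself --- $\alpha^n(\tau)$ and $\alpha^n$ of the relevant sub-arc of $\tau$ can leave $\overline D$ --- and the actual arguments in the literature (Brouwer, Brown, Fathi, Guillou, Franks) do not proceed via an invariant disk; they either run a fixed-point-index computation along a Jordan curve built from translation arcs, or use non-recurrence of free disk chains, to manufacture a fixed point. So the step you label decisive is both unproven and, in the naive form you state it, probably not repairable.

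A secondary but genuine error: to see that the union $\bigcup_i\alpha^i(\tau)$ is an embedded line you invoke ``the standard strengthening'' that every \emph{compact} set has proper orbit under a fixed-point-free plane homeomorphism. No such strengthening holds, and the paper itself flags this: if orbits of compact sets were proper, only finitely many translates $\alpha^i(\tau)$ could meet any compact set, so the line would automatically be closed and properly embedded --- exactly what the caveat after Theorem~\ref{Brouwer_theorem} says can fail. What actually needs to be ruled out, once the disjointness of non-consecutive translates is known, is that distant translates accumulate onto $\tau$ itself (which would destroy the subspace homeomorphism with $\R$ without creating any intersections); that requires a separate argument from Brouwer theory, not the false properness claim, and your sketch is internally inconsistent on this point since you simultaneously grant that the line need not be closed.
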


If $\alpha$ is fixed-point free and $C^1$, then for any point $p$, 
one can choose a free arc in $\CC(p,\alpha(p),d\alpha)$. The next Lemma shows
that the writhe of a free arc is well-defined.

\begin{lemma}\label{writhes_agree}
Let $\tau,\tau' \in \CC(p,\alpha(p),d\alpha)$ be free. Then $[\tau] = [\tau']$.
\end{lemma}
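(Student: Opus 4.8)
\textbf{Proof proposal for Lemma~\ref{writhes_agree}.}

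The plan is to reduce the statement to Brown's criterion (Theorem~\ref{Brown_theorem}) by showing that if $[\tau] \neq [\tau']$, then one of the two arcs fails to be free. Concretely, suppose for contradiction that $[\tau'] = [\tau] + n$ with $n \neq 0$. By Lemma~\ref{writhe}, changing the writhe by $n$ is realized by inserting $n$ Dehn twists in a small annulus centered at the positive endpoint $\alpha(p)$; after a $C^1$ isotopy rel endpoints (preserving membership in $\CC(p,\alpha(p),d\alpha)$), I may assume $\tau'$ agrees with $\tau$ outside a small disk $U$ around $\alpha(p)$, and inside $U$ it is obtained from $\tau$ by $n$ twists. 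The key point is that these $n$ twists force $\tau'$ to intersect $\alpha(\tau')$ (or $\alpha^{-1}(\tau')$) in points other than the endpoint, for $n \neq 0$: the image $\alpha(\tau')$ leaves $\alpha(p)$ along the direction $d\alpha(\tau''(0)) = d\alpha \cdot \tau'(0)$, which by the defining relation of $\CC$ equals $\tau'(1)$, i.e.\ $\alpha(\tau')$ is tangent to $\tau'$ at $\alpha(p)$; but the twisting means $\alpha(\tau')$ spirals around $\alpha(p)$ relative to $\tau'$, so near $\alpha(p)$ the arc $\alpha(\tau')$ must cross $\tau'$ transversally. This contradicts freeness of $\tau'$: by Brown's theorem a free arc can be characterized (or at least a free arc for which $\tau \cap \alpha(\tau)$ is exactly the endpoint is certainly free), and more to the point, if $\cup_i \alpha^i(\tau')$ is an embedded line, then in particular $\tau'$ and $\alpha(\tau')$ meet only at $\alpha(p)$, which we have just shown is impossible.

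To make the ``spiralling forces crossing'' step rigorous, I would localize near $\alpha(p)$ using a $C^1$ chart in which $\alpha$ is $C^1$-close to its linearization $d\alpha$ on a small neighborhood, and track the relative winding of the tangent direction of $\alpha(\tau')$ around $\tau'$. The writhe difference $n$ is precisely this relative winding number (this is the content of Lemma~\ref{writhe}, interpreted as the local obstruction at the endpoint), so $n \neq 0$ means the arcs wind around each other and hence cross. Alternatively — and this may be cleaner — I would argue globally: a free arc $\tau$ for $\alpha$ extends to an $\alpha$-invariant embedded line $L_\tau = \cup_i\alpha^i(\tau)$ with a preferred ``translation'' structure, and the germ of $L_\tau$ at infinity (or the way the pieces $\alpha^i(\tau)$ fit together around each vertex $\alpha^i(p)$) pins down the writhe; two free arcs give isotopic such lines, and the isotopy can be taken equivariant, so their writhes coincide.

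The main obstacle I expect is the ``infinitely many intersections'' / non-proper embedding subtlety flagged right after Theorem~\ref{Brouwer_theorem}: the line $\cup_i\alpha^i(\tau)$ need not be properly embedded, so I cannot naively appeal to planar Jordan-curve separation arguments on it, and near $\alpha(p)$ the arcs $\tau'$ and $\alpha(\tau')$ might a priori accumulate. The fix is exactly the device already used earlier in the paper in the ``orbit is proper'' case: replace $\alpha(\tau')$ by a curve $\delta$ agreeing with it outside a small neighborhood of $\{p,\alpha(p)\}$, agreeing with $\tau'$ in a yet smaller neighborhood, and with $[\alpha(\tau')] - [\delta] = 0$; then $\tau' \cap \delta$ is a finite set, the crossing-counting argument applies to $\delta$, and freeness of $\tau'$ (via Brown) is contradicted because even one essential intersection of $\delta$ with $\tau'$ propagates — under iteration by $\alpha$ — to an intersection obstructing embeddedness of $\cup_i \alpha^i(\tau')$. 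Carefully checking that this propagation genuinely violates Theorem~\ref{Brown_theorem}'s hypothesis (i.e.\ that one really can arrange $\tau' \cap \alpha(\tau')$ to contain more than $\alpha(p)$, not merely $\delta \cap \tau'$) is the delicate bookkeeping step.
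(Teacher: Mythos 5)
There is a genuine gap, and it is in the heart of your argument rather than in the bookkeeping. First, after you isotope $\tau'$ inside $\CC(p,\alpha(p),d\alpha)$ so that it agrees with $\tau$ outside a small disk about $\alpha(p)$ and differs from it by $n$ twists inside, the resulting arc is no longer known to be free: freeness is a condition on $\alpha(\tau')$ and all its iterates, and it is not preserved by isotopy within $\CC$. So any intersections you exhibit for the normalized arc cannot be played off against Theorem~\ref{Brown_theorem} for the original $\tau'$. Second, the ``spiralling forces crossing'' step conflates two different relative windings: the $n$ twists measure the position of $\tau'$ relative to $\tau$, not of $\alpha(\tau')$ relative to $\tau'$. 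With your normalization $\tau'=\tau$ near $p$, the arc $\alpha(\tau')$ coincides with $\alpha(\tau)$ near $\alpha(p)$, so at best the local winding argument produces intersections of the modified $\tau'$ with $\alpha(\tau)$ --- a \emph{mixed} intersection which no freeness hypothesis forbids: two free arcs may perfectly well cross each other's $\alpha$-images. Nothing in your argument forces $\tau'\cap\alpha(\tau')$ to be larger than $\{\alpha(p)\}$; indeed both $\tau$ and $\tau'$ genuinely are free, and the content of the lemma is that their writhes agree, not that one of them must violate Brown's criterion. The alternative global sketch (two free arcs yield equivariantly isotopic invariant lines, hence equal writhe) asserts exactly what needs proof and is not something the tools available here deliver.

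What is missing is an isotopy \emph{invariant} of classes in $\pi_0(\CC)$ that detects freeness, and this is how the paper argues. For $\delta\in\CC(p,\alpha(p),d\alpha)$ set $b_i(\delta)=\delta\cdot\alpha^i(\delta)$ and $w(\delta)=\sum_{i\ge 0}b_i(\delta)-\sum_{i\le 0}b_i(\delta)$. Properness of the orbit $\alpha^i(p)$ (Theorem~\ref{Brouwer_theorem}) guarantees that along a generic isotopy in $\CC$ the only changes occur when the arc crosses some $\alpha^i(p)$ with $i\ne 0,1$, and then four of the $b_j$ change in cancelling pairs, so $w$ is constant on $\pi_0(\CC)$; a positive twist at $\alpha(p)$ changes $w$ by $2$; and a free arc has $w=0$ termwise. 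Hence $2[\tau]=w(\tau)$ in canonical coordinates, and any two free arcs have equal writhe. The essential point your local analysis at $\alpha(p)$ cannot capture is that this invariant uses intersections with \emph{all} iterates $\alpha^i(\delta)$, which is precisely where freeness enters.
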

\begin{proof}
For an element $\delta \in \CC(p,\alpha(p),d\alpha)$ define 
$$b_i(\delta) = \delta \cdot \alpha^i(\delta)$$ and
define
$$w(\delta) = \sum_{i \ge 0} b_i(\delta) - \sum_{i \le 0} b_i(\delta)$$
leaving aside for the moment whether this sum is finite. If $\delta$ is free,
then of course $w(\delta) = 0$, since the $b_i$ are termwise zero.
Since $\alpha$ is free, the orbit $\alpha^i(p)$ is proper by Theorem~\ref{Brouwer_theorem}.
If $\delta_t$ is a $1$-parameter family in $\CC$, the value of $w(\delta_t)$ only
changes when $\delta_t$ crosses over some $\alpha^i(p)$, where necessarily $i \ne 0,1$
since elements of $\CC$ are embedded.

At such a crossing, four terms in the sum change: $b_i$ changes by $\pm 1$, $b_{i-1}$
changes by $\mp 1$, $b_{-i}$ changes by $\mp 1$ and $b_{-i+1}$ changes by $\pm 1$.
It follows that $w(\cdot)$ is constant (if it is defined at all) on classes
in $\pi_0(\CC)$.

If we change $\delta$ by performing a small positive Dehn twist at $\alpha(p)$, it
changes the writhe by $+1$. It also changes $b_1$ by $+1$ and $b_{-1}$ by $-1$,
so $w$ changes by $2$. It follows that there are {\em canonical} $\Z$ co-ordinates
on $\pi_0(\CC)$ when $\alpha$ is free, determined by the formula
$$2[\tau] = w(\tau)$$
In particular, $[\tau] - [\tau'] = 0 - 0 = 0$.
\end{proof}

We now return to our original problem. Pick a point $p$, and let
$\tau$ be a smooth arc from $p$ to $\beta(p)$. For each $q \in \tau$
let $\CC_q:= \CC(q,\alpha(q),d\alpha)$. As $q$ varies in $\tau$, this determines
a bundle over $\tau$ which we denote $\CC_\tau$. Since the spaces $\CC_q$
vary continuously as $q$ varies, there is an induced flat connection on
$\pi_0$ of the fibers. Since the base space is an interval, this gives us a
canonical identification $\pi_0(\CC_q) = \pi_0(\CC_r)$ for any two $q,r \in \tau$.

By Theorem~\ref{Brouwer_theorem} and Lemma~\ref{writhes_agree},
every $\pi_0(\CC_q)$ contains a distinguished element,
which contains all the free arcs. By Theorem~\ref{Brown_theorem}, free arcs are
stable under perturbation, so the set of free classes defines a {\em continuous} 
section of the $\pi_0$ bundle. In particular, the canonical identification between
$\pi_0$ of fibers identifies the free class in each fiber.

The element $\beta$ induces a map $\beta:\CC_p \to \CC_{\beta(p)}$ by
$\beta:\sigma \to \beta(\sigma)$ for $\sigma \in \CC_p$. If $\sigma$ is free,
then so is $\beta(\sigma)$, so their respective classes in $\pi_0(\CC_p)$
and $\pi_0(\CC_{\beta(p)})$ are equal under the canonical identification. 
It follows that we can choose a continuous section 
$S:\tau \to \CC_\tau$ with $S(p) = \sigma$ and $S(\beta(p)) = \beta(\sigma)$. The
family of tangents to this family of smooth curves pulls back by $\rho$ to give
a nonvanishing section of $E'_\rho$, and therefore $\rho^*e([S_1])=0$.

This completes the proof of Theorem~\ref{torus_smooth_vanishes}.

\end{document}